\documentclass[11pt]{amsart}
\usepackage{amsmath, amssymb, amsbsy, amsfonts, amsthm, latexsym, amsopn, amstext, amsxtra, euscript, amscd, color, mathrsfs}
\usepackage{amsmath,amscd}
\usepackage[normalem]{ulem}
\usepackage{soul}

\makeatletter
\@namedef{subjclassname@2020}{%
  \textup{2020} Mathematics Subject Classification}
\makeatother
\PassOptionsToPackage{hyphens}{url}
\usepackage[hidelinks]{hyperref}

\allowdisplaybreaks

\usepackage{tabularx}
\usepackage[colorinlistoftodos,prependcaption,textsize=tiny]{todonotes}

\usepackage{amscd}
\usepackage{color,enumerate}
\usepackage{array, multirow, multicol}
\usepackage{longtable}
\usepackage{bm}  
\newcommand{\RNum}[1]{\lowercase\expandafter{\romannumeral #1\relax}}

\newtheorem{thm}{Theorem}[section]

\newtheorem{cor}[thm]{Corollary}
\newtheorem{prop}[thm]{Proposition}

\newtheorem{rmk}[thm]{Remark}

\newtheorem{thm-con}[thm]{Theorem-Conjecture}
\numberwithin{equation}{section}

\theoremstyle{definition}
\newtheorem{defn}[thm]{Definition}

\newcommand{\f}{\Bbb F}

\newcommand{\cA}{\mathcal A}

\newcommand{\cV}{\mathcal V}

\newcommand{\cS}{\mathcal S}
\newcommand{\cU}{\mathcal U}
\newcommand{\cW}{\mathcal W}

\newcommand{\cPS}{\mathcal{PS}}

\newcommand{\F}{{\mathbb F}}
\newcommand{\Tr}{{\operatorname{Tr}}}

\begin{document}
\title[bent partitions]{General constructions of normal bent partitions related to vectorial dual-bent functions}

\author[A. Kholosha]{Alexander Kholosha}
\address{The Selmer Center, Department of Informatics, University of Bergen, PB 7803, N-5020, Bergen, Norway}
\email{oleksandr.kholosha@uib.no}

\author[M. Pal]{Mohit Pal}
\address{The Selmer Center, Department of Informatics, University of Bergen, PB 7803, N-5020, Bergen, Norway}
\email{mathmohit@outlook.com}

\maketitle 

\begin{abstract}
    Bent partitions were introduced as a generalization of partial spread construction of bent functions and they became a hot research 
    topic recently. In this paper, we prove two general constructions of normal bent partitions that are related to vectorial dual-bent 
    functions. They cover many of the currently known bent partitions of this type as partial cases. These constructions also provide a 
    large number of new bent partitions. Relevant properties of bent functions obtained from such partitions are proven. 
\end{abstract}

\section{Introduction}

Boolean bent functions were first introduced by Rothaus~\cite{R76} in 1976 as an interesting combinatorial object. They are functions from $\F_2^n$ to $\F_2$ with $n$ even, whose Hamming distance to the set of all affine functions is maximum. Equivalently, the absolute square of their Walsh transform coefficients is constant. Later, the research in this area was stimulated by the significant relation to the following topics in mathematics and computer science: algebraic combinatorics, coding theory, sequences and cryptography. Kumar, Scholtz and Welch in~\cite{KSW85} generalized the notion of Boolean bent functions to the case of functions from $(\mathbb Z/q\mathbb Z)^n$ to $\mathbb Z/q\mathbb Z$, where $q$ is the power of a prime $p$. Complete classification of bent functions looks hopeless even in the binary case and the case of $p$-ary bent functions is still more complicated. However, many explicit methods are known for constructing bent functions, either from scratch (with so-called primary constructions) or from other bent functions (in so-called secondary constructions). 

Two major primary constructions of bent functions are found in the original Maiorana-McFarland and the partial spread classes. The original Maiorana-McFarland bent functions from $\F_q \times \F_q$ to $\F_p$ (where $q=p^m$) are functions of the form 
\begin{equation}\label{MMFform}
f(x,y)= \Tr_m(y \pi(x)) + h(x)
\end{equation}
for a permutation $\pi$ of $\F_q$ and a function $h$ from $\F_q$ to $\F_p$. It is well known that $\pi$ being a permutation is necessary 
and sufficient for $f$ to be bent. Any function $f:\F_q \times \F_q\mapsto\F_p$ that is affine on the cosets of a $m$-dimensional subspace 
$U$ of $\F_{q^2}$ is affine equivalent to a function of the form (\ref{MMFform}). We say then that $f$ belongs to the completed original 
class of Maiorana-McFarland bent functions {\em with associated subspace $U$}. Partial spread bent functions are bent functions whose 
preimage sets are formed by unions of $m$-dimensional subspaces of $\F_{q^2}$. Both of these classes contain only regular bent functions. 

Let $\cS_{s,k}= \{U_0, U_1, \ldots,U_s\}$, where $U_i\subset \F_{q^2}$, $\lvert U_i\rvert= p^k$ for $0 \leq i \leq s$ and $U_i \cap U_j = \{0\}$ for all  $0 \leq i<j \leq s$. We call $\cS_{s,k}$ to be a {\bf partial $(s,k)$-partition} of $\F_{q^2}$. When $k$ divides $2m$ and $s=\frac{q^2-1}{p^k-1}-1$, we call  $\cS_{s,k}$ a {\bf $k$-partition} of $\F_{q^2}$ and denote it by $\cS_{k}$. In addition, if $U_i$'s are $k$-dimensional subspaces of $\F_p^{2m}$ for all $0 \leq i \leq s$, we call $\cS_{s,k}$ a {\bf partial $k$-spread} of $\F_p^{2m}$ and $\cS_{k}$ a {\bf $k$-spread} of $\F_p^{2m}$. In particular, if $k=m$, then these are called {\bf partial spread} and {\bf spread} of $\F_p^{2m}$. 

For any function $f:\F_{p^n}\mapsto \F_p$ and $j \in \F_p$, denote 
\[D_j := \{x \in \F_{p^n} \mid f(x)=j\}.\] 
Assume $\cS_{s,m}$ is a partial spread of $\F_{q^2}$ and $f$ is such that, $D_j \cup \{0\}$ is a union of exactly $p^{m-1}$ subspaces $U_i$ 
for every $j \in \F^*_p$ and $D_0=\F_p^{2m}\setminus\bigcup^s_{i=0}U_i^*$. Then $f$ is bent and every bent function which is constant on 
every $U_i^*$ must be of this form. This class of bent functions is called the {\bf partial spread class}. 

Motivated by the fact that the bentness is a property of the partition, Anbar and Meidl~\cite{AM22}, introduced the notion of bent partitions as follows. 

\begin{defn}
A partition $\Omega  = \{A_0,A_1,\ldots,A_K \}$ of $\F_{p^n}$ is called a bent partition of $\F_{p^n}$ of depth $K$ if every function $f:\F_{p^n}\mapsto \F_p$ such that $f(A_0)\equiv c\in \F_p$ and for every $j\in\F_p$, $D_j$ contains exactly $K/p$ elements of $\Omega \backslash \{A_0\}$, is bent. 
\end{defn}
A trivial example of the bent partition is a partition of $\F_{3^n}$ obtained from a bent function $f:\F_{3^n}\mapsto \F_3$ as $\Omega =\{D_0, D_1, D_2\}$. In~\cite[Remark 6]{AAKM24}, the authors showed that except for this example, bent partitions can exist only for $\F_{p^n}$ with $n=2m$. In~\cite[Theorem 3]{WWF26}, the authors showed that the depth $K$ of a bent partition $\Omega$ of $\F_{q^2}$, for which all bent functions generated by $\Omega$ are regular or all are weakly regular but not regular, is always a power of $p$. Based on the cardinalities of the sets $A_i$'s the authors divided bent partitions into the following two types: 

\begin{itemize}
    \item Type I. $\lvert A_0 \rvert=q$ and $\lvert A_1 \rvert = \lvert A_2 \rvert = \cdots = \lvert A_K \rvert$ 
    \item Type II. $\lvert A_0 \rvert=\lvert A_1 \rvert-q$  and $\lvert A_1 \rvert = \lvert A_2 \rvert = \cdots = \lvert A_K \rvert$  
\end{itemize}
All bent partitions with $K>4$, known so far, are of Type I with $A_0$ being an $m$-dimensional subspace. These are known as normal bent partitions, that are defined as follows: 

\begin{defn}\label{nbp}
Normal partition $\Omega  = \{U, A_1,\ldots,A_K \}$ of $\F_{q^2}$, where $U$ is an $m$-dimensional subspace of $\F_{q^2}$, is called a normal bent partition of $\F_{q^2}$ of depth $K$ if every function $f:\F_{q^2}\mapsto\F_p$ such that $f(U)\equiv c\in \F_p$ and for every $j\in\F_p$, $D_j$ contains exactly $K/p$ elements of $\Omega \backslash \{U\}$, is bent. 
\end{defn}
In this paper, we start with $m$-partitions of $\F_{q^2}$. If exactly one of the elements of an $m$-partition is a subspace of $\F_{q^2}$, then we call it a {\bf normal $m$-partition}. Then we combine the elements of $m$-partitions to form {\bf partitions} of $\F_{q^2}$. In the definition of a parition, there is no restriction on the size of the sets, the only property required is that the sets are disjoint and their union is equal to $\F_{q^2}$. Similarly, if exactly one of the elements of a partition is a subspace of $\F_{q^2}$, then we call it a {\bf normal partition}. We use permutation polynomials over $\F_q$ and normal $m$-partitions of $\F_{q^2}$ to construct normal bent partitions of $\F_{q^2}$. Choosing particular permutation polynomials allows our normal bent partition to cover all the normal bent partitions known so far.

\section{Normal Bent Partitions of Type I (Construction 1)}
Let $p$ be an odd prime and $m,k$ are positive integers such that $q=p^m$ and $k$ divides $m$. Let $g_1$ be a function from $\F_{q^2}$ to 
$\F_q$ given by 
\[
g_1(z)= \frac{z-z^q}{\beta} (\pi_1(z+z^q))^{q-2},
\]
where $\pi_1$ is a permutation over $\F_q$ such that $\pi_1(0)=0$ and $\beta \in \F_{q^2}$ such that $\beta^q=-\beta$. Let
\[
U:= \{ z \in \F_{q^2} \mid z+z^q=0\} = \beta \F_q.
\]
Then it is easy to observe that $\lvert U \rvert =q$ and $g_1(U)\equiv0$. For any element $j \in \F_q$, define the set 
\[
U_j := \left \{z \in \F_{q^2} \mid z-z^q=  j \beta \pi_1(z+z^q)  \right \},
\]
and $U_j^*=  U_j \backslash \{0\}$. It is easy to observe that $U \cap U_j = \{0\}$ for all $j \in \F_q$ and $g_1(U_j^*)\equiv j$. Also, notice that for any $j \in \F_q$, the map 
\begin{equation}\label{u1}
u \mapsto \frac{1}{2}(u+j\beta \pi_1(u))
\end{equation}
is a bijection from $\F_q$ to $U_j$. Thus, $\lvert U_j \rvert = q$ for all $j \in \F_q$ and $U_{j_1} \cap U_{j_2}= \{0\}$ for all $j_1, j_2 
\in \F_q$, $j_1 \neq j_2$. Thus, the set 
\[
\cU_m= \{U, U_j \mid j \in \F_q\}
\]
is a normal $m$-partition of $\F_{q^2}$. Now we combine the elements of $\cU_m$ to form a normal partition of $\F_{q^2}$. For each $\gamma 
\in \F_{p^k}$, define the sets 
\[
\cA_1(\gamma):= \bigcup_{\substack{j \in \F_q \\ \Tr_k^m(j)= \gamma}} U_j^*.  
\]
Thus, the set
\begin{equation}\label{part1}
    \cS_1=\{U, \cA_1(\gamma) \mid \gamma \in \F_{p^k}\}
\end{equation}
is a normal partition of $\F_{q^2}$. One may note that $\lvert \cA_1(\gamma) \rvert= p^{m-k}(q-1)$ for all $\gamma \in \F_{p^k}$. 

Define function $f_1:\F_{q^2}\mapsto\F_p$ as $f_1(z) = B(\Tr_k^m(g_1(z)))+h_1(z+z^q)$, where $B:\F_{p^k}\mapsto\F_p$ is any balanced 
function with $B(0)=0$ and $h_1:\F_q\mapsto \F_p$. Take any $z\in U_j^*$ with $j\in\F_q$, then $f_1(z) = B(\Tr_k^m(j))+h_1(u)$ for some 
unique $u\in\F^*_q$ (because of mapping in (\ref{u1})). Thus, $f_1$ is constant on $U_j^*$ if and only if $h_1$ is constant on $\F^*_q$. We 
need $f_1$ to be constant on the elements of $\cS_1$ and wlog can assume 
\begin{equation}\label{f1}
\begin{split} 
    f_1(z) &= B(\Tr_k^m(g_1(z)))+ c(1-(z+z^q)^{q-1})= \begin{cases}
         c~&~\mbox{if}~z\in U\\
         B(\Tr_k^m(g_1(z)))~~&~\mbox{if}~z\in \F_{q^2} \backslash U,
     \end{cases}
\end{split}
\end{equation}
where $c \in \F_p$. 

\begin{thm}\label{th:f1}
For any function $f_1$ given in~\eqref{f1}, and any $a \in \F_q$, the corresponding Walsh transform coefficient $\cW_{f_1}(a)=q 
\zeta_p^{c}$ and, thus, $f^*_1(a)=c$. Further, for any $a \in \F_{q^2} \backslash \F_q$, consider two cases. 
\renewcommand{\theenumi}{\roman{enumi}}
\renewcommand{\labelenumi}{(\theenumi)}
\begin{enumerate}
\item\label{it:1} If $B()$ is linear on $\F_{p^k}$ then $f_1$ is bent if and only if $B(\gamma)=\Tr_k(t'\gamma)$ for some 
    $t'\in\F^*_{p^k}$. In this case, $f_1$ is a regular bent function and $f^*_1(-a)=\Tr_m \left(a' \pi_1^{-1}\left(\frac{-t'}{b'}\right) 
    \right)$. Moreover, vectorial function $F_1(z)=\Tr_k^m(g_1(z))+ c(1-(z+z^q)^{q-1})$ is bent and all its components belong to the 
    completed original class of Maiorana-McFarland bent functions with associated subspace $U$. 
\item\label{it:2} If $\pi_1^{-1}(x)$, considered as a polynomial in $\F_q[x]$, is such that $\pi_1^{-1}(x) \pmod{x^{p^k}-x}$ is a 
    linearized polynomial over $\F_q[x]$ (say, $\pi_1^{-1}(x) = \sum_{i} \alpha_i x^{s_i(p^k-1)+p^{r_i}}$) then $f_1$ is a regular bent 
    function and $f^*_1(-a)=B(-\delta_1)$, where $\delta_1= \Tr_k^m \left(\sum_{i} \left(\frac{a'\alpha_i}{ 
    (b')^{s_i(p^k-1)+p^{r_i}}}\right)^{p^{k-r_i}}\right)$. 
\end{enumerate}
In both cases, $a' =\frac{a+a^q}{2} \in \F_q $ and $b' = \frac{\beta(a-a^q)}{2} \in \F_q$.
\end{thm}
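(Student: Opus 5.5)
The plan is to compute $\cW_{f_1}(a)=\sum_{z\in\F_{q^2}}\zeta_p^{f_1(z)-\Tr_{2m}(az)}$ directly. I will use the coordinates suggested by the map~\eqref{u1}: write every $z\in\F_{q^2}$ uniquely as $z=\frac12(u+\beta v)$ with $u=z+z^q\in\F_q$ and $v=(z-z^q)/\beta\in\F_q$. A short check using $\beta^q=-\beta$ gives
\[
\Tr_{2m}(az)=\Tr_m(a'u+b'v),\qquad a'=\tfrac{a+a^q}{2},\quad b'=\tfrac{\beta(a-a^q)}{2}.
\]
In these coordinates $z\in U$ exactly when $u=0$, and then $f_1(z)=c$. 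For $u\neq 0$ we have $g_1(z)=v/\pi_1(u)$. Hence
\[
\cW_{f_1}(a)=\zeta_p^{c}\sum_{v\in\F_q}\zeta_p^{-\Tr_m(b'v)}+\sum_{u\in\F_q^*}\zeta_p^{-\Tr_m(a'u)}\sum_{v\in\F_q}\zeta_p^{B(\Tr_k^m(v/\pi_1(u)))-\Tr_m(b'v)} .
\]

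If $a\in\F_q$, then $b'=0$. The first sum equals $q\zeta_p^c$. Each inner sum vanishes, because $B\circ\Tr_k^m$ is balanced on $\F_q$ ($\Tr_k^m$ is onto with equal fibres and $B$ is balanced). This gives $\cW_{f_1}(a)=q\zeta_p^c$ and $f_1^*(a)=c$.

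If $a\notin\F_q$, then $b'\neq0$ and the first sum is zero. In the inner sum I substitute $v=\pi_1(u)y$ and group $y$ according to $\gamma=\Tr_k^m(y)$. The sum $\sum_{\Tr_k^m(y)=\gamma}\zeta_p^{-\Tr_m(b'\pi_1(u)y)}$ is nonzero only when $s:=-b'\pi_1(u)\in\F_{p^k}$. In that case $\Tr_m(sy)=\Tr_k(s\gamma)$, and the fibre sum equals $p^{m-k}\zeta_p^{\Tr_k(s\gamma)}$. Since $\pi_1$ is a permutation with $\pi_1(0)=0$, the correspondence $u=\pi_1^{-1}(-s/b')$ with $s\in\F_{p^k}^*$ parametrises exactly these $u$. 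Writing $\widehat{B}(s)=\sum_{\gamma\in\F_{p^k}}\zeta_p^{B(\gamma)+\Tr_k(s\gamma)}$, and noting that $\widehat B(0)=0$ by balancedness, I get the master formula
\[
\cW_{f_1}(a)=p^{m-k}\sum_{s\in\F_{p^k}}\zeta_p^{-\Tr_m\left(a'\pi_1^{-1}(-s/b')\right)}\widehat B(s).
\]

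For case~(\ref{it:1}), a linear $B$ has the form $\Tr_k(t'\gamma)$, and balancedness forces $t'\neq0$; this is the "only if" direction. Then $\widehat B(s)=p^k$ for $s=-t'$ and $\widehat B(s)=0$ otherwise. So $\cW_{f_1}(a)=q\,\zeta_p^{-\Tr_m(a'\pi_1^{-1}(t'/b'))}$, which shows that $f_1$ is regular bent. Replacing $a$ by $-a$ negates both $a'$ and $b'$, and this yields the stated $f_1^*(-a)$. For the vectorial claim, every component $\Tr_k(t'F_1)$ with $t'\ne0$ is an $f_1$ of this type, hence bent. On each coset of $U$ (fixed $u\neq0$) the component is $\Tr_m(t'v/\pi_1(u))$, which is affine in $v$; on $U$ itself it is constant. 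So the components are affine on the cosets of $U$ and lie in the completed Maiorana--McFarland class with associated subspace $U$.

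For case~(\ref{it:2}), the key step is to evaluate $\pi_1^{-1}(-s/b')$ for $s\in\F_{p^k}$. The argument $-s/b'$ is not in $\F_{p^k}$, so I will use the monomials directly. Since $s^{p^k-1}=1$, each monomial gives $(-s/b')^{s_i(p^k-1)+p^{r_i}}=(-s)^{p^{r_i}}(b')^{-(s_i(p^k-1)+p^{r_i})}$. I will then rewrite $\Tr_m(\alpha_i a'(-s)^{p^{r_i}}(b')^{-e_i})$, where $e_i=s_i(p^k-1)+p^{r_i}$. Raising the argument to the power $p^{k-r_i}$ leaves $\Tr_m$ unchanged, and since $(-s)^{p^{r_i}\cdot p^{k-r_i}}=-s$, the result is $\Tr_k\!\big(-s\,\Tr_k^m((\alpha_ia'/(b')^{e_i})^{p^{k-r_i}})\big)$. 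Summing over $i$ gives exactly $\Tr_k(-s\delta_1)$. Substituting into the master formula and swapping the sums gives
\[
\cW_{f_1}(a)=p^{m-k}\sum_\gamma\zeta_p^{B(\gamma)}\sum_s\zeta_p^{\Tr_k(s(\gamma+\delta_1))}=q\,\zeta_p^{B(-\delta_1)}.
\]
This shows regular bentness and gives the dual value once the sign convention for $-a$ is tracked as in case~(\ref{it:1}).

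I expect the main obstacle to be the careful bookkeeping in case~(\ref{it:2}). The exponent reduction needs the polynomial to be reduced "mod $x^{p^k}-x$" in the sense relevant for arguments that are $\F_{p^k}$-multiples of $1/b'$, and the Frobenius twist must commute correctly with $\Tr_k^m$. The signs coming from $-a$ versus $a$ also need checking. The reduction to the master formula is routine.
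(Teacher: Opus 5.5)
Your proposal is correct and follows essentially the same route as the paper. Your substitution $v=\pi_1(u)y$, with $y$ grouped by $\Tr_k^m(y)=\gamma$, is exactly the paper's parametrisation of $U_j^*$ via \eqref{u1} with $j=y$, and your master formula becomes \eqref{MainEqT1} after $a\mapsto -a$. The one step you leave implicit is in case~(ii): passing from $\cW_{f_1}(a)=q\zeta_p^{B(-\delta_1(a))}$ to the stated $f_1^*(-a)$ needs $\delta_1$ to be unchanged under $(a',b')\mapsto(-a',-b')$, which holds because each exponent $s_i(p^k-1)+p^{r_i}$ is odd for odd $p$, a parity fact that case~(i) did not need.
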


\begin{proof}
Start with calculating the Walsh transform at $a=0$. 
\begin{align*}
        \cW_{f_1}(0) &= \sum_{z \in \F_{q^2}} \zeta_p^{B(\Tr_k^m(g_1(z)))+ c(1-(z+z^q)^{q-1})}\\
        &=  \sum_{z \in U} \zeta_p^c + \sum_{\gamma \in \F_{p^k}} \sum_{z \in \cA_1(\gamma)} \zeta_p^{B(\Tr_k^m(g_1(z)))}\\
        &= \lvert U \rvert \zeta_p^{c} + \sum_{\gamma \in \F_{p^k}}  \lvert \cA_1(\gamma) \rvert \zeta_p^{B (\gamma)} \\
        &=q \zeta_p^{c} + p^{m-k}(q-1) \sum_{\gamma \in \F_{p^k}} \zeta_p^{B (\gamma)}\\
        &=q \zeta_p^{c}.
    \end{align*} 
Now, for any nonzero $a \in \F_{q^2}$, we have
\begin{align*}
     \cW_{f_1}(-a) &= \sum_{z \in \F_{q^2}} \zeta_p^{B(\Tr_k^m(g_1(z)))+ c(1-(z+z^q)^{q-1}) +\Tr_{2m}(az)}\\
        &=  \sum_{z \in U} \zeta_p^{c+\Tr_{2m}(az)} + \sum_{\gamma \in \F_{p^k}} \sum_{z \in \cA_1(\gamma)} \zeta_p^{B(\Tr_k^m  (g_1(z)))+\Tr_{2m}(az)}\\
        &= \zeta_p^c \sum_{z \in U} \zeta_p^{\Tr_{2m}(az)} + \sum_{\gamma \in \F_{p^k}} \zeta_p^{B(\gamma)} \sum_{z \in \cA_1(\gamma )} \zeta_p^{\Tr_{2m}(az)} \\
     &= \zeta_p^c \sum_{z \in U} \zeta_p^{\Tr_{2m}(az)} + \sum_{\gamma \in \F_{p^k}} \zeta_p^{B(\gamma)} \sum_{\substack{j \in \F_q \\ \Tr_k^m(j)=\gamma }}  \sum_{z \in U_j^*} \zeta_p^{\Tr_{2m}(az)} \\
     &\stackrel{\eqref{u1}}{=} \zeta_p^c \sum_{z \in U} \zeta_p^{\Tr_{2m}(az)} + \sum_{\gamma \in \F_{p^k}} \zeta_p^{B(\gamma)}  \sum_{\substack{j \in \F_q \\ \Tr_k^m(j)=\gamma }}  \sum_{u \in \F_q^*} \zeta_p^{\Tr_{2m}\left(\frac{a}{2}(u+j\beta \pi_1(u))\right)} \\
     &= \zeta_p^c \sum_{z \in U} \zeta_p^{\Tr_{2m}(az)} + \sum_{\gamma \in \F_{p^k}} \zeta_p^{B(\gamma)}  \sum_{\substack{j \in \F_q \\ \Tr_k^m(j)=\gamma }}  \sum_{u \in \F_q^*} \zeta_p^{\Tr_m \left(u \left(\frac{a+a^q}{2} \right)+j \beta \pi_1(u) \left(\frac{a-a^q}{2} \right) \right)}\\
    &= \zeta_p^c \sum_{z \in U} \zeta_p^{\Tr_{2m}(az)} + \sum_{\gamma \in \F_{p^k}} \zeta_p^{B(\gamma)}  \sum_{\substack{j \in \F_q \\ \Tr_k^m(j)=\gamma }}  \sum_{u \in \F_q^*} \zeta_p^{\Tr_m \left(a' u+jb'\pi_1(u) \right)},
\end{align*}
where $\frac{a+a^q}{2}=a' \in \F_q $ and $\frac{\beta(a-a^q)}{2} =b' \in \F_q$. Now, we consider two cases, namely, $a \in \F_q^*$ and $a \in \F_{q^2} \backslash \f_q$.\\

\textbf{Case 1.} Let $a \in \F_q^*$. In this case, $a'=a$, $b'=0$ and hence we have
\begin{align*}
    \cW_{f_1}(-a)  &= \zeta_p^c \sum_{z \in U} \zeta_p^{\Tr_{2m}(az)} + \sum_{\gamma \in \F_{p^k}} \zeta_p^{B(\gamma)}  \sum_{\substack{j \in \F_q \\ \Tr_k^m(j)=\gamma }}  \sum_{u \in \F_q^*} \zeta_p^{\Tr_m (au )}\\
    &= \zeta_p^c \sum_{z \in U} \zeta_p^{\Tr_m(a(z+z^q))} - p^{m-k} \sum_{\gamma \in \F_{p^k}} \zeta_p^{B(\gamma)}\\
    &=q \zeta_p^c.
\end{align*}

\textbf{Case 2.} Let $a \in \F_{q^2} \backslash \f_q$. In this case, we have $b' \neq 0$. For each $\gamma \in \F_{p^k}$, fix $j_\gamma \in 
\F_q$ such that $\Tr_k^m(j_\gamma)= \gamma $. Let $j= j_\gamma + v\in\F_q$ for some $v \in \ker(\Tr_k^m)$. Then, we have 
\begin{align*}
    \allowdisplaybreaks 
    \cW_{f_1}(-a) &=   \zeta_p^c \sum_{z \in U} \zeta_p^{\Tr_{2m}(az)} + \sum_{\gamma \in \F_{p^k}} \zeta_p^{B(\gamma)} \sum_{\substack{j \in \F_q \\ \Tr_k^m(j)=\gamma }}  \sum_{u \in \F_q^*} \zeta_p^{\Tr_m \left(a' u+jb' \pi_1(u) \right)}\\
      &= \zeta_p^c \sum_{z \in \F_q} \zeta_p^{\Tr_{2m}(a \beta z)} + \sum_{\gamma \in \F_{p^k}} \zeta_p^{B(\gamma)}  \sum_{v \in \ker(\Tr_k^m)}  \sum_{u \in \F_q^*} \zeta_p^{\Tr_m \left(a' u+(j_{\gamma}+v)b' \pi_1(u)  \right)} \\
      &= \zeta_p^c \sum_{z \in \F_q} \zeta_p^{\Tr_m((a-a^q)\beta z)} + \sum_{\gamma \in \F_{p^k}} \zeta_p^{B(\gamma)}  \sum_{v \in \ker(\Tr_k^m)}  \sum_{u \in \F_q^*} \zeta_p^{\Tr_m \left(a' u+j_{\gamma}b' \pi_1(u) \right)} \zeta_p^{\Tr_m \left(b' v \pi_1(u) \right)} \\
        &=  \sum_{\gamma \in \F_{p^k}} \zeta_p^{B(\gamma)} \sum_{u \in \F_q^*} \zeta_p^{\Tr_m \left(a' u+j_{\gamma} b' \pi_1(u) \right)}  \sum_{v \in \ker(\Tr_k^m)}  \zeta_p^{\Tr_m \left(b' v \pi_1(u) \right)} \\
        &=  \sum_{\gamma \in \F_{p^k}} \zeta_p^{B(\gamma)} \sum_{u \in \F_q^*} \zeta_p^{\Tr_m \left(a' u+j_{\gamma} b' \pi_1(u) \right)}  \sum_{v \in \ker(\Tr_k^m)}  \zeta_p^{\Tr_k \left( \Tr_k^{m} \left(b' v \pi_1(u) \right) \right)}\\
        &=p^{m-k} \sum_{\gamma \in \F_{p^k}} \zeta_p^{B(\gamma)}  \sum_{\substack{u \in \F_q^* \\ b' \pi_1(u) \in \F_{p^k}^*}} \zeta_p^{\Tr_m \left(a' u+j_{\gamma} b' \pi_1(u) \right)} \\
         &= p^{m-k} \sum_{\gamma \in \F_{p^k}} \zeta_p^{B(\gamma)} \sum_{\substack{u \in \F_q^* \\ b' \pi_1(u)  \in \F_{p^k}^*}} \zeta_p^{\Tr_m (a' u)+\Tr_k (\gamma b' \pi_1(u) )}.
    \end{align*}
Since $\pi_1$ is bijective on $\F_q$, the map $u \mapsto b'\pi_1(u)$ is bijective on $\F_q$. Thus, there exist precisely $p^k-1$ element $u 
\in \F_q^*$ such that $b'\pi_1(u) \in \F_{p^k}^*$. Let $b' \pi_1(u)= t$ then $u=\pi_1^{-1}\left(\frac{t}{b'}\right)$, where $\pi_1^{-1}$ is 
the compositional inverse of $\pi_1$. Thus, we have 
\begin{equation}\label{MainEqT1}
    \cW_{f_1}(-a) = p^{m-k} \sum_{\gamma \in \F_{p^k}} \zeta_p^{B(\gamma)} \sum_{t \in \F_{p^k}^*} \zeta_p^{\Tr_m \left(a' \pi_1^{-1}\left(\frac{t}{b'}\right) \right)+\Tr_k (\gamma t)}.
    \end{equation}

In particular, if $B$ is linear on $\F_{p^k}$ then
\begin{align*}
\cW_{f_1}(-a)&=p^{m-k} \sum_{t \in \F_{p^k}^*} \zeta_p^{\Tr_m \left(a' \pi_1^{-1}\left(\frac{t}{b'}\right) \right)}\sum_{\gamma \in \F_{p^k}} \zeta_p^{B(\gamma)+\Tr_k (\gamma t)}\\
&=q \sum_{\substack{t \in \F_{p^k}^*\\B(\gamma)\equiv -\Tr_k (\gamma t)}} \zeta_p^{\Tr_m \left(a' \pi_1^{-1}\left(\frac{t}{b'}\right) \right)}\\
&=q \zeta_p^{\Tr_m \left(a' \pi_1^{-1}\left(\frac{-t'}{b'}\right) \right)},
\end{align*}
if $B(\gamma)=\Tr_k(t'\gamma)$ for some $t'\in\F^*_{p^k}$ and $\gamma\in\F_{p^k}$. Obviously, $|\cW_{f_1}(-a)|\neq q$ if $B$ is not of the 
trace form. From \eqref{MainEqT1} it also follows that taking $B(\gamma)=\Tr_k(t'\gamma)$ for any $t'\in\F^*_{p^k}$ we have 
$\cW_{f_1}(-a)=q \zeta_p^{\Tr_m \left(a' \pi_1^{-1}\left(\frac{-t'}{b'}\right) \right)}$. Therefore, vectorial function $F_1(z)= 
\Tr_k^m(g_1(z))+ c(1-(z+z^q)^{q-1})$ is bent and all its components are bent (each component function $\Tr_k(t'F_1(z))$ has the form of 
\eqref{f1} with $c\Tr_k(t')$ in place of $c$). 

In the general case with balanced $B$, consider $\pi_1^{-1}(x)$ as a polynomial in $\F_q[x]$ and assume that $\pi_1^{-1}(x) 
\pmod{x^{p^k}-x}$ is a linearized polynomial over $\F_q[x]$. This is equivalent to saying that 
\[
\pi_1^{-1}(x) = \sum_{i} \alpha_i x^{s_i(p^k-1)+p^{r_i}}.
\]
Therefore,
\[
\pi_1^{-1}\left( \frac{t}{b'} \right) = \sum_{i}  \frac{\alpha_i t^{p^{r_i}}}{ (b')^{s_i(p^k-1)+p^{r_i}}} = \sum_{i} \beta_i t^{p^{r_i}} =:L_1(t),
\]
where 
\[
\beta_i=\frac{\alpha_i}{ (b')^{s_i(p^k-1)+p^{r_i}}} .
\]
Then \eqref{MainEqT1} reduces to
\begin{align*}
    \cW_{f_1}(-a) &= p^{m-k} \sum_{\gamma \in \F_{p^k}} \zeta_p^{B(\gamma)} \sum_{t \in \F_{p^k}^*} \zeta_p^{\Tr_m(a' L_1(t))+\Tr_k (\gamma t)}\\
    &= p^{m-k} \sum_{\gamma \in \F_{p^k}} \zeta_p^{B(\gamma)} \sum_{t \in \F_{p^k}^*} \zeta_p^{\Tr_m( \sum_{i} a'\beta_i t^{p^{r_i}})+\Tr_k (\gamma t)}\\
    &= p^{m-k} \sum_{\gamma \in \F_{p^k}} \zeta_p^{B(\gamma)} \sum_{t \in \F_{p^k}^*} \zeta_p^{\Tr_m( t\sum_{i} (a'\beta_i)^{p^{k-r_i}})+\Tr_k (\gamma t)}\\
    &= p^{m-k} \sum_{\gamma \in \F_{p^k}} \zeta_p^{B(\gamma)} \sum_{t \in \F_{p^k}^*} \zeta_p^{\Tr_k( t \Tr_k^m(\sum_{i} (a'\beta_i)^{p^{k-r_i}}))+\Tr_k (\gamma t)}\\
     &= p^{m-k} \sum_{\gamma \in \F_{p^k}} \zeta_p^{B(\gamma)} \sum_{t \in \F_{p^k}^*} \zeta_p^{\Tr_k ((\delta_1+\gamma) t)},\\
     &= p^{m-k} \sum_{\gamma \in \F_{p^k}} \zeta_p^{B(\gamma)} \sum_{t \in \F_{p^k}} \zeta_p^{\Tr_k ((\delta_1+\gamma) t)}\\
     &= q\zeta_p^{B(-\delta_1)},
\end{align*}
where
\[
    \delta_1 = \Tr_k^m \left(\sum_{i} (a'\beta_i)^{p^{k-r_i}}\right).
\]
This completes the proof.
\end{proof}

\begin{cor}\label{co1}
If permutation $\pi_1^{-1}(x)$ (considered as a polynomial in $\F_q[x]$) reduced modulo $x^{p^k}-x$, is a linearized polynomial over 
$\F_q[x]$ then $\cS_1$ in \eqref{part1} is a normal bent partition of $\F_{q^2}$ of Type I. All bent functions obtained from $\cS_1$ are 
regular. Conversely, if $q>3$ then every $p$-ary bent function that is constant on the elements of $\cS_1$ is such that for every 
$j\in\F_p$, $D_j$ contains exactly $p^{k-1}$ elements of     $\cS_1\setminus\{U\}$. 
\end{cor}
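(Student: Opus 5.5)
The forward direction will come straight from Theorem~\ref{th:f1}. Any function constant on the elements of $\cS_1$ with value $c$ on $U$ can be written as $f(z)=B(\Tr_k^m(g_1(z)))+c(1-(z+z^q)^{q-1})$ for some $B:\F_{p^k}\mapsto\F_p$. Indeed, on $\cA_1(\gamma)$ we have $\Tr_k^m(g_1(z))=\gamma$, so the value there is $B(\gamma)$.

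The requirement that each $D_j$ contains exactly $K/p=p^{k-1}$ of the sets $\cA_1(\gamma)$ means precisely that $B$ is balanced. Only the condition $B(0)=0$ is missing. If $B(0)=b\neq0$, I will replace $B$ by $B-b$ and $c$ by $c-b$. This changes $f$ by the constant $b$, which preserves bentness and regularity. So Theorem~\ref{th:f1}\eqref{it:2} applies, and all Walsh coefficients are $q\zeta_p^{(\cdot)}$: at $a\in\F_q$ they equal $q\zeta_p^c$, and for $a\notin\F_q$ they equal $q\zeta_p^{B(-\delta_1)}$. Hence $f$ is regular bent. Since $|U|=q$ and all $|\cA_1(\gamma)|$ are equal, $\cS_1$ is a normal bent partition of Type I with depth $K=p^k$.

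For the converse, let $f$ be bent and constant on the elements of $\cS_1$, with values $c$ on $U$ and $B(\gamma)$ on $\cA_1(\gamma)$; here $B$ is now arbitrary. The computation at $a=0$ in the proof of Theorem~\ref{th:f1} did not use balancedness, and it gives
\[
\cW_f(0)=q\zeta_p^c+p^{m-k}(q-1)S,\qquad S=\sum_{\gamma\in\F_{p^k}}\zeta_p^{B(\gamma)}.
\]
Likewise, Case~1 for $a\in\F_q^*$ gives $\cW_f(-a)=q\zeta_p^c-p^{m-k}S$. Bentness forces both of these to have absolute value $q$, and subtracting them kills the $c$-term:
\[
\cW_f(0)-\cW_f(-a)=p^{m-k}qS.
\]
Since both coefficients lie on the circle $|w|=q$, we get $|p^{m-k}qS|\le 2q$, that is, $|S|\le 2p^{k-m}$.

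Write $S=\sum_{j\in\F_p}N_j\zeta_p^j$, where $N_j$ is the number of $\gamma$ with $B(\gamma)=j$; note $S\in\mathbb Z[\zeta_p]$. Because $p^{m-k}(q-1)S$ equals $\cW_f(0)-q\zeta_p^c$, it is divisible by $q$ in $\mathbb{Z}[\zeta_p]$, whose prime above $p$ is $(1-\zeta_p)$. This suggests using the norm: if $S\neq0$, then $|N(S)|\ge1$. The main obstacle is to conclude $S=0$ when $k=m$, where the bound $|S|\le2$ alone is not enough.

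For that case I would instead use the exact equations $|q\zeta_p^c+(q-1)S|=q$ and $|q\zeta_p^c-S|=q$. Setting $w=S\zeta_p^{-c}$, these say that both $1+\frac{q-1}{q}w$ and $1-\frac{w}{q}$ lie on the unit circle. Expanding both gives $2\operatorname{Re}w=-\frac{q-1}{q}|w|^2$ and $2\operatorname{Re}w=\frac{1}{q}|w|^2$. These are compatible only if $|w|^2\bigl(\frac{q-1}{q}+\frac1q\bigr)=0$, so $w=0$.

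This argument in fact works for every $k$: the same two equations appear, with $p^{m-k}$ scaling only the coefficient of $S$. Since $q>3$, we also have $a\in\F_q^*$ available. So $S=\sum_j N_j\zeta_p^j=0$. Because $1+x+\dots+x^{p-1}$ is the minimal polynomial of $\zeta_p$, all $N_j$ must be equal, namely $N_j=p^{k-1}$. This is exactly the statement that each $D_j$ contains $p^{k-1}$ elements of $\cS_1\setminus\{U\}$. I would then check where the hypothesis $q>3$ is genuinely needed; the likely reason is to exclude the degenerate example in $\F_{3^n}$ mentioned in the introduction.
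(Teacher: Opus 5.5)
Your proof is correct. The forward direction is the paper's argument: normalize so that $f(\cA_1(0))=0$, recognize the form \eqref{f1}, and apply Theorem~\ref{th:f1}(\ref{it:2}).

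There is one small slip there. Since $g_1(U)\equiv 0$, the expression $B(\Tr_k^m(g_1(z)))+c(1-(z+z^q)^{q-1})$ equals $B(0)+c$ on $U$, so it represents $f$ only once $B(0)=0$. Your shift of $f$ itself by $-B(0)$ is exactly what makes the representation correct, so nothing is lost.

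For the converse you take a genuinely different route. The paper only cites Nyberg's lemma \cite[Lemma~1(i)]{AM22}, i.e.\ the value distribution of a $p$-ary bent function in even dimension, which rests on $\cW_f(0)=\pm q\zeta_p^{j_0}$. Comparing $\cW_f(0)=\sum_j|D_j|\zeta_p^j$ with the block sizes $q$ and $p^{m-k}(q-1)$ then forces $B$ to be balanced.

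You instead use only the defining condition $|\cW_f|=q$, but at two points: $0$ and one $a\in\F_q^*$. There the formulas from the proof of Theorem~\ref{th:f1}, which indeed do not use balancedness, give $q\zeta_p^c+p^{m-k}(q-1)S$ and $q\zeta_p^c-p^{m-k}S$. The two circle conditions force $S=0$, and irreducibility of $1+x+\dots+x^{p-1}$ then gives $N_j=p^{k-1}$. Your detours through $|S|\le 2p^{k-m}$ and divisibility in $\mathbb{Z}[\zeta_p]$ can simply be dropped; the divisibility claim would itself need the Kumar--Scholtz--Welch form of the Walsh values.

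What your route buys is that it is self-contained and never uses $q>3$, since $\F_q^*$ is nonempty for every $q$. So it proves the converse without that hypothesis. What the paper's route buys is that it needs only $\cW_f(0)$, hence only the block cardinalities, so it applies to any partition with these block sizes. Yours uses the specific geometry: the character $z\mapsto\zeta_p^{\Tr_{2m}(az)}$ is trivial on $U$ and sums to $-1$ over each $U_j^*$.
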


\begin{proof}
Use Definition~\ref{nbp} and take any function $f:\F_{q^2}\mapsto\F_p$ such that $f(U)\equiv c\in \F_p$ and for every $j\in\F_p$, $D_j$ 
contains exactly $p^{k-1}$ elements of $\cS_1 \backslash \{U\}$. Wlog assume that $f(\cA_1(0))=0$. Define function $g:\F_{q^2}\mapsto\F_q$ 
to be constant on the elements of $\cU_m$ by setting $g(U)\equiv 0$ and $g(U^*_j)\equiv j$ for $j \in \F_q$. By the definition of $U_j$, 
this actually means that $g(z)=g_1(z)$. Also, define function $B:\F_{p^k}\mapsto\F_p$ such that $B(\gamma)=f(z)$ taking any 
$z\in\cA_1(\gamma)$, then $B$ is balanced and $B(0)=0$. For $B$ and $g$ defined this way, we have that 
$f(z)=B(\Tr^m_k(g_1(z)))+c(1-(z+z^q)^{q-1})$ that is a function of form \eqref{f1}. Therefore, it suffices to show bentness of $f_1$ with 
any balanced $B$ with $B(0)=0$ to conclude that $\cS_1$ is a bent partition of $\F_{q^2}$. 

Converse statement follows from the Nyberg's lemma (see, e.g., \cite[Lemma~1(i)]{AM22}). 
\end{proof}

\begin{rmk}
Theorem~\ref{th:f1} and Corollary~\ref{co1} also hold for $p=2$ taking 
\[g_1(z)= (z+z^q) (\pi_1(\beta^qz+\beta z^q))^{q-2},\] 
where $\beta\in\F_{q^2}$ is such that $\beta^q=\beta+1$. The sets $U$ and $U_j$, $j \in \F_q$ are defined as follows 
\begin{align*}
    U&= \{ z \in \F_{q^2} \mid \beta^qz+\beta z^q =0\} =\beta \F_q,\\
    U_j &= \left \{z \in \F_{q^2} \mid (z+z^q)=  j \pi_1(\beta^qz+\beta z^q)  \right \}.
\end{align*}
For any $j \in \F_q$, the bijective map from $\F_q$ to $U_j$ is defined as $u \mapsto u+j\beta \pi_1(u)$. The proof exactly follows the one 
of Theorem~\ref{th:f1}. 
\end{rmk}

\section{Normal Bent Partitions of Type I (Construction 2)}
Let $g_2$ be a function from $\F_{q^2}$ to $\F_q$ given by
\[
g_2(z)= \left(\pi_2\left(\frac{z-z^q}{\beta}\right) \right)^{q-2} (z+z^q),
\]
where $\pi_2$ is a permutation over $\F_q$ such that $\pi_2(0)=0$ and $\beta \in \F_{q^2}$ such that $\beta^q=-\beta$. Let $V:=\F_q$, then 
it is easy to observe that $g_2(V)=0$. For any element $j \in \F_q$, define the set 
\[
V_j := \left \{z \in \F_{q^2} \mid z+z^q=  j\pi_2\left(\frac{z-z^q}{\beta}\right)  \right \},
\]
and $V_j^*=  V_j \backslash \{0\}$. It is easy to observe that $V \cap V_j = \{0\}$ for all $j \in \F_q$ and $g_2(V^*_j)=j$. Also, notice 
that for any $j \in \F_q$, the map 
\begin{equation}\label{u2}
 u \mapsto \frac{1}{2}(u \beta +j \pi_2(u)) 
\end{equation}
is a bijection from $\F_q$ to $V_j$. Thus, $\lvert V_j \rvert = q$ for all $j \in \F_q$ and $V_{j_1} \cap V_{j_2}= \{0\}$ for all $j_1, j_2 
\in \F_q$, $j_1 \neq j_2$. Thus, the set 
\[
\cV_m= \{V, V_j \mid j \in \F_q\}
\]
is a normal $m$-partition of $\F_{q^2}$. Now we combine the elements of $\cV_m$ to form a normal partition of $\F_{q^2}$. For each $\gamma 
\in \F_{p^k}$, define the sets 
\[
\cA_2(\gamma):= \bigcup_{\substack{j \in \F_q \\ \Tr_k^m(j)= \gamma}} V_j^*.  
\]
Thus, the set
\begin{equation}\label{part2}
    \cS_2=\{V, \cA_2(\gamma) \mid \gamma \in \F_{p^k}\}
\end{equation}
is a normal partition of $\F_{q^2}$. One may note that $\lvert \cA_2(\gamma) \rvert= p^{m-k}(q-1)$ for all $\gamma \in \F_{p^k}$. 

Define function $f_2$ from $\F_{q^2}$ to $\F_p$ as 
\begin{equation} \label{f2}
\begin{split}
     f_2(z) &= B(\Tr_k^m(g_2(z))) + c(1-(\beta^{-1}(z-z^q))^{q-1}) =
     \begin{cases}
         c~&~\mbox{if}~z\in V\\
         B(\Tr_k^m(g_2(z)))~~&~\mbox{if}~z\in \F_{q^2} \backslash V,
     \end{cases}
\end{split}
\end{equation}
where $B: \F_{p^k} \rightarrow \F_p$ is any balanced function and $B(0)=0$ and $c \in \F_p$. 

\begin{thm}\label{th:f2}
For any function $f_2$ given in~\eqref{f2} and any $a \in \F_{q^2}$ with $a+a^q =0$, the corresponding Walsh transform coefficient 
$\cW_{f_2}(a)=q \zeta_p^{c}$ and, thus, $f^*_2(a)=c$. Also, if $a \in \F^*_q$ then $\cW_{f_2}(a)=q$ and, thus, $f^*_2(a)=0$. Further, for 
any $a \in \F_{q^2} \backslash \f_q$ with $a+a^q\neq 0$, consider two cases. 
\renewcommand{\theenumi}{\roman{enumi}}
\renewcommand{\labelenumi}{(\theenumi)}
\begin{enumerate}
\item\label{it:3} If $B()$ is linear on $\F_{p^k}$ then $f_2$ is bent if and only if $B(\gamma)=\Tr_k(t'\gamma)$ for some 
    $t'\in\F^*_{p^k}$ and $\gamma\in\F_{p^k}$. In this case, $f_2$ is a regular bent function and $f^*_2(-a)=\Tr_m \left(a' 
    \pi_2^{-1}\left(\frac{-t'}{b'}\right) \right)$. Moreover, vectorial function $F_2(z)=\Tr_k^m(g_2(z))+ c(1-(\beta^{-1}(z-z^q))^{q-1})$ 
    is bent and all its components belong to the copmleted original class of Maiorana-McFarland bent functions with associated subspace 
    $V$. 
\item\label{it:4} If $\pi_2^{-1}(x)$, considered as a polynomial in $\F_q[x]$, is such that $\pi_2^{-1}(x) \pmod{x^{p^k}-x}$ is a 
    linearized polynomial over $\F_q[x]$ (say, $\pi_2^{-1}(x) = \sum_{i} \alpha_i x^{s_i(p^k-1)+p^{r_i}}$) then $f_2$ is a regular bent 
    function and $f^*_2(-a)=B(-\delta_2)$, where $\delta_2= \Tr_k^m\left(\sum_{i} \left(\frac{a' \alpha_i}{ (b')^{s_i(p^k-1)+p^{r_i}}} 
    \right)^{p^{k-r_i}}\right)$. 
\end{enumerate}
In both cases, $a' = \frac{\beta(a-a^q)}{2}\in \F_q $ and $b' = \frac{a+a^q}{2} \in \F_q$.
\end{thm}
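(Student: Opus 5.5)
The plan is to repeat the computation in the proof of Theorem~\ref{th:f1}, with the roles of $\F_q$ and $\beta\F_q$ interchanged. I first split $\cW_{f_2}(-a)$ into the contribution of $V=\F_q$ and the contributions of the sets $\cA_2(\gamma)=\bigcup_{\Tr_k^m(j)=\gamma}V_j^*$. This gives
\[
\cW_{f_2}(-a)=\zeta_p^c\sum_{z\in\F_q}\zeta_p^{\Tr_m((a+a^q)z)}+\sum_{\gamma\in\F_{p^k}}\zeta_p^{B(\gamma)}\sum_{\substack{j\in\F_q\\ \Tr_k^m(j)=\gamma}}\sum_{u\in\F_q^*}\zeta_p^{\Tr_{2m}(az)},
\]
where $z=\frac12(u\beta+j\pi_2(u))$ is the bijection \eqref{u2}. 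Using $\beta^q=-\beta$, I would check that $az+(az)^q=ua'+jb'\pi_2(u)$ with $a'=\frac{\beta(a-a^q)}{2}$ and $b'=\frac{a+a^q}{2}$, both in $\F_q$. So the inner exponent is $\Tr_m(a'u+jb'\pi_2(u))$, exactly as in the proof of Theorem~\ref{th:f1}. The first sum equals $q$ if $a+a^q=0$ and $0$ otherwise.

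\textbf{Case $a+a^q=0$.} Here $b'=0$, so the inner sum is $\sum_{u\in\F_q^*}\zeta_p^{\Tr_m(a'u)}$, which equals $-1$ for $a\neq0$. Summing over the $p^{m-k}$ values of $j$ gives $-p^{m-k}\sum_\gamma\zeta_p^{B(\gamma)}=0$, since $B$ is balanced. Hence $\cW_{f_2}(-a)=q\zeta_p^c$. The case $a=0$ is the direct count used in Theorem~\ref{th:f1}, with $\lvert\cA_2(\gamma)\rvert=p^{m-k}(q-1)$. As $-a$ also satisfies $a+a^q=0$, this gives $\cW_{f_2}(a)=q\zeta_p^c$.

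\textbf{All $a$ with $b'\neq0$.} The $V$-part vanishes. I write $j=j_\gamma+v$ with $v\in\ker\Tr_k^m$. Summing over $v$ kills every $u$ with $b'\pi_2(u)\notin\F_{p^k}$ and multiplies the remaining terms by $p^{m-k}$. Substituting $t=b'\pi_2(u)$, so that $u=\pi_2^{-1}(t/b')$, yields the analogue of \eqref{MainEqT1}:
\[
\cW_{f_2}(-a)=p^{m-k}\sum_{\gamma\in\F_{p^k}}\zeta_p^{B(\gamma)}\sum_{t\in\F_{p^k}^*}\zeta_p^{\Tr_m\left(a'\pi_2^{-1}\left(\frac{t}{b'}\right)\right)+\Tr_k(\gamma t)}.
\]
For $a\in\F_q^*$ we have $a'=0$ and $b'=a$. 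The inner sum is then $p^k[\gamma=0]-1$, and balancedness of $B$ with $B(0)=0$ gives $\cW_{f_2}(-a)=q$. Since $-a\in\F_q^*$ too, this gives $\cW_{f_2}(a)=q$ and $f_2^*(a)=0$.

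For $a\notin\F_q$ with $a+a^q\neq0$, cases (i) and (ii) proceed verbatim as in Theorem~\ref{th:f1}.
\begin{enumerate}
\item For linear $B$, swap the two sums. The $\gamma$-sum is $p^k$ exactly when $B(\gamma)=-\Tr_k(\gamma t)$ identically, and $0$ otherwise. Hence $\lvert\cW\rvert=q$ forces $B=\Tr_k(t'\cdot)$, and then $\cW_{f_2}(-a)=q\zeta_p^{\Tr_m(a'\pi_2^{-1}(-t'/b'))}$. Each component $\Tr_k(t'F_2)$ has the form \eqref{f2} with $c\Tr_k(t')$ in place of $c$, which gives bentness of $F_2$. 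Its components are affine on the cosets of $V$, so they lie in the completed Maiorana--McFarland class with associated subspace $V$.
\item For the linearized hypothesis, write $\pi_2^{-1}(t/b')=\sum_i\beta_it^{p^{r_i}}$ on $\F_{p^k}$, and move the Frobenius powers across $\Tr_m$. This gives $\Tr_m(a'\pi_2^{-1}(t/b'))=\Tr_k(\delta_2t)$. Completing the $t$-sum to all of $\F_{p^k}$ (the added $t=0$ term contributes $\sum_\gamma\zeta_p^{B(\gamma)}=0$) yields $q\zeta_p^{B(-\delta_2)}$.
\end{enumerate}

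The only step needing real care is the trace identity $\Tr_{2m}(az)=\Tr_m(a'u+jb'\pi_2(u))$ under the new parametrization. The signs coming from $\beta^q=-\beta$ determine which of $a\pm a^q$ plays the role of $a'$ and which of $b'$. This is what interchanges the exceptional sets: here $a+a^q=0$ gives $\zeta_p^c$ and $a\in\F_q^*$ gives $1$, the reverse of Theorem~\ref{th:f1}. Everything else is a transcription of the earlier proof.
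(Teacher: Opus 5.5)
Your proposal is correct and follows essentially the same route as the paper. It uses the same split into the $V$-part and the $\cA_2(\gamma)$-parts via the bijection \eqref{u2}, the same trace identity $\Tr_{2m}(az)=\Tr_m(a'u+jb'\pi_2(u))$, the same $\ker(\Tr_k^m)$ summation leading to \eqref{MainEqT2}, and the same treatment of cases (i) and (ii). The only difference is organizational: you derive \eqref{MainEqT2} once for all $a$ with $b'\neq 0$ and specialize to $a'=0$ for $a\in\F_q^*$, whereas the paper computes the case $a\in\F_q^*$ separately and obtains the same value $q$.
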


\begin{proof}
Start with calculating the Walsh transform at $a=0$. 
\begin{equation*}
    \begin{split}
        \cW_{f_2}(0) = \sum_{z \in \F_{q^2}} \zeta_p^{f_2(z)} &=  \sum_{z \in V} \zeta_p^c + \sum_{\gamma \in \F_{p^k}} \sum_{z \in \cA_2(\gamma)} \zeta_p^{B(\Tr_k^m(g_2(z)))}\\
        &= \lvert V \rvert \zeta_p^c + \sum_{\gamma \in \F_{p^k}}  \lvert \cA_2(\gamma) \rvert \zeta_p^{B (\gamma)} \\
        &= \lvert V \rvert \zeta_p^c + p^{m-k}(q-1) \sum_{\gamma \in \F_{p^k}} \zeta_p^{B (\gamma)} \\
        &=q\zeta_p^c.
    \end{split}
\end{equation*} 
Now, for any nonzero $a \in \F_{q^2}$, we have
\begin{align*}
        \cW_{f_2}(-a) &= \sum_{z \in \F_{q^2}} \zeta_p^{B(\Tr_k^m(g_2(z)))+ c(1-(\beta(z-z^q))^{q-1}) +\Tr_{2m}(az)}\\
        &=  \sum_{z \in V} \zeta_p^{c+\Tr_{2m}(az)} + \sum_{\gamma \in \F_{p^k}} \sum_{z \in \cA_2(\gamma)} \zeta_p^{B(\Tr_k^m  (g_2(z)))+\Tr_{2m}(az)}\\
        &= \zeta_p^c \sum_{z \in \F_q} \zeta_p^{\Tr_{2m}(az)} + \sum_{\gamma \in \F_{p^k}} \zeta_p^{B(\gamma)} \sum_{z \in \cA_2(\gamma )} \zeta_p^{\Tr_{2m}(az)} \\
      &= \zeta_p^c \sum_{z \in \F_q} \zeta_p^{\Tr_{2m}(az)} + \sum_{\gamma \in \F_{p^k}} \zeta_p^{B(\gamma)} \sum_{\substack{j \in \F_q \\ \Tr_k^m(j)=\gamma }}  \sum_{z \in V_j^*} \zeta_p^{\Tr_{2m}(az)} \\
      &\stackrel{\eqref{u2}}{=} \zeta_p^c \sum_{z \in \F_q} \zeta_p^{\Tr_{2m}(az)} + \sum_{\gamma \in \F_{p^k}} \zeta_p^{B(\gamma)}  \sum_{\substack{j \in \F_q \\ \Tr_k^m(j)=\gamma }}  \sum_{u \in \F_q^*} \zeta_p^{\Tr_{2m}\left(\frac{a}{2} (u \beta +j \pi_2(u))\right)}\\
       &= \zeta_p^c \sum_{z \in \F_q} \zeta_p^{\Tr_{2m}(az)} + \sum_{\gamma \in \F_{p^k}} \zeta_p^{B(\gamma)}  \sum_{\substack{j \in \F_q \\ \Tr_k^m(j)=\gamma }}  \sum_{u \in \F_q^*} \zeta_p^{\Tr_m \left(u \beta \left(\frac{a-a^q}{2} \right)+j \pi_2(u) \left(\frac{a+a^q}{2} \right) \right)}\\
       &= \zeta_p^c \sum_{z \in \F_q} \zeta_p^{\Tr_{2m}(az)} + \sum_{\gamma \in \F_{p^k}} \zeta_p^{B(\gamma)} \sum_{\substack{j \in \F_q \\ \Tr_k^m(j)=\gamma }}  \sum_{u \in \F_q^*} \zeta_p^{\Tr_m \left(a' u+jb'\pi_2(u) \right)},
    \end{align*}
where $\frac{\beta(a-a^q)}{2}=a' \in \F_q $ and $\frac{a+a^q}{2}  =b' \in \F_q$. Now, we consider two cases, namely, $a \in \F_q^*$ and $a \in \F_{q^2} \backslash \f_q$.\\
\textbf{Case 1.} Let $a \in \F_q^*$. In this case, $a'=0$ and $b'=a$. For each $\gamma \in \F_{p^k}$, fix $j_\gamma \in \F_q$ such that 
$\Tr_k^m(j_\gamma)= \gamma $. Let $j= j_\gamma + v$ for some $v \in \ker(\Tr_k^m)$. Then, we have 
\begin{align*}
        \cW_{f_2}(-a) &= \zeta_p^c \sum_{z \in \F_q} \zeta_p^{\Tr_m((a+a^q)z)} + \sum_{\gamma \in \F_{p^k}} \zeta_p^{B(\gamma)} \sum_{\substack{j \in \F_q \\ \Tr_k^m(j)=\gamma }}  \sum_{u \in \F_q^*} \zeta_p^{\Tr_m (ja\pi_2(u))}\\
        &=\zeta_p^c \sum_{z \in \F_q} \zeta_p^{\Tr_m(2az)} + \sum_{\gamma \in \F_{p^k}} \zeta_p^{B(\gamma)} \sum_{v \in \ker(\Tr_k^m)}  \sum_{u \in \F_q^*} \zeta_p^{\Tr_m ((j_\gamma +v)a\pi_2(u))}\\
        &= \sum_{\gamma \in \F_{p^k}} \zeta_p^{B(\gamma)}  \sum_{u \in \F_q^*} \zeta_p^{\Tr_m (j_\gamma a\pi_2(u))}  \sum_{v \in \ker(\Tr_k^m)} \zeta_p^{\Tr_k (\Tr_k^{m}(av\pi_2(u)))}\\
        &= p^{m-k}\sum_{\gamma \in \F_{p^k}} \zeta_p^{B(\gamma)}  \sum_{\substack{u \in \F_q^* \\ a\pi_2(u) \in \F_{p^k}^*}} \zeta_p^{\Tr_k (\gamma a\pi_2(u))}  \\
        &= p^{m-k}(p^k-1) +p^{m-k}\sum_{\gamma \in \F_{p^k}^*} \zeta_p^{B(\gamma)}  \sum_{\substack{u \in \F_q^* \\ a\pi_2(u) \in \F_{p^k}^*}} \zeta_p^{\Tr_k (\gamma a\pi_2(u))} \\
        &= p^{m-k}(p^k-1)-p^{m-k}\sum_{\gamma \in \F_{p^k}^*} \zeta_p^{B(\gamma)}\\
        &= p^{m}-p^{m-k}\sum_{\gamma \in \F_{p^k}} \zeta_p^{B(\gamma)}\\
        &= q.
\end{align*}

\textbf{Case 2.} Let $a \in \F_{q^2} \backslash \f_q$. In this case, we have $a' \neq 0$. Now we consider two subcases, namely, $b'=0$ and $b' \neq 0$.\\

\textbf{Subase 2.1.} Let $b'= 0$. In this case, we have 
\begin{align*}
        \cW_{f_2}(-a) &= \zeta_p^c \sum_{z \in \F_q} \zeta_p^{\Tr_{2m}(az)} + \sum_{\gamma \in \F_{p^k}} \zeta_p^{B(\gamma)} \sum_{\substack{j \in \F_q \\ \Tr_k^m(j)=\gamma }}  \sum_{u \in \F_q^*} \zeta_p^{\Tr_m \left(a' u \right)}\\
        &= \zeta_p^c \sum_{z \in \F_q} \zeta_p^{\Tr_m((a+a^q)z)} -p^{m-k} \sum_{\gamma \in \F_{p^k}} \zeta_p^{B(\gamma)} \\
        &=q \zeta_p^c.
    \end{align*}

\textbf{Subase 2.2.} Let $b' \neq 0$. In this case, we have 
\begin{align*}
        \cW_{f_2}(-a) &= \zeta_p^c \sum_{z \in \F_q} \zeta_p^{\Tr_{2m}(az)} + \sum_{\gamma \in \F_{p^k}} \zeta_p^{B(\gamma)} \sum_{\substack{j \in \F_q \\ \Tr_k^m(j)=\gamma }}  \sum_{u \in \F_q^*} \zeta_p^{\Tr_m \left(a' u+jb'\pi_2(u) \right)}\\
        &= \zeta_p^c \sum_{z \in \F_q} \zeta_p^{\Tr_m((a+a^q)z)} + \sum_{\gamma \in \F_{p^k}} \zeta_p^{B(\gamma)} \sum_{v \in \ker(\Tr_k^m)}  \sum_{u \in \F_q^*} \zeta_p^{\Tr_m (a' u+(j_\gamma +v)b'\pi_2(u))}\\
        &= \sum_{\gamma \in \F_{p^k}} \zeta_p^{B(\gamma)}  \sum_{u \in \F_q^*} \zeta_p^{\Tr_m (a' u+j_\gamma b'\pi_2(u))}  \sum_{v \in \ker(\Tr_k^m)} \zeta_p^{\Tr_k (\Tr_k^{m}(v b' \pi_2(u)))}\\
        &= p^{m-k}\sum_{\gamma \in \F_{p^k}} \zeta_p^{B(\gamma)}  \sum_{\substack{u \in \F_q^* \\ b'\pi_2(u) \in \F_{p^k}^*}} \zeta_p^{\Tr_m (a' u)+\Tr_k (\gamma b'\pi_2(u))}.
    \end{align*}
Since $\pi_2$ is bijective on $\F_q$, the map $u \mapsto b'\pi_2(u)$ is bijective on $\F_q$. Thus, there exist precisely $p^k-1$ element $u 
\in \F_q^*$ such that $b'\pi_2(u) \in \F_{p^k}^*$. Let $b' \pi_2(u)= t$ then $u=\pi_2^{-1}\left(\frac{t}{b'}\right)$, where $\pi_2^{-1}$ is 
the compositional inverse of $\pi_2$. Thus, we have 
\begin{equation}\label{MainEqT2}
        \cW_{f_2}(-a) = p^{m-k}\sum_{\gamma \in \F_{p^k}} \zeta_p^{B(\gamma)}  \sum_{t \in \F_{p^k}^*} \zeta_p^{\Tr_m \left(a' \pi_2^{-1}\left(\frac{t}{b'}\right)\right)+\Tr_k (\gamma t)}.
    \end{equation}

In particular, if $B$ is linear on $\F_{p^k}$ then
\begin{align*}
\cW_{f_2}(-a)&=p^{m-k} \sum_{t \in \F_{p^k}^*} \zeta_p^{\Tr_m \left(a' \pi_2^{-1}\left(\frac{t}{b'}\right) \right)}\sum_{\gamma \in \F_{p^k}} \zeta_p^{B(\gamma)+\Tr_k (\gamma t)}\\
&=q \sum_{\substack{t \in \F_{p^k}^*\\B(\gamma)\equiv -\Tr_k (\gamma t)}} \zeta_p^{\Tr_m \left(a' \pi_2^{-1}\left(\frac{t}{b'}\right) \right)}\\
&=q \zeta_p^{\Tr_m \left(a' \pi_2^{-1}\left(\frac{-t'}{b'}\right) \right)},
\end{align*}
if $B(\gamma)=\Tr_k(t'\gamma)$ for some $t'\in\F^*_{p^k}$ and $\gamma\in\F_{p^k}$. Obviously, $|\cW_{f_2}(-a)|\neq q$ if $B$ is not of the 
trace form. From \eqref{MainEqT2} it also follows that taking $B(\gamma)=\Tr_k(t'\gamma)$ for any $t'\in\F^*_{p^k}$ we have 
$\cW_{f_2}(-a)=q \zeta_p^{\Tr_m \left(a' \pi_2^{-1}\left(\frac{-t'}{b'}\right) \right)}$. Therefore, vectorial function $F_2(z)= 
\Tr_k^m(g_2(z))+ c(1-(\beta^{-1}(z-z^q))^{q-1})$ is bent and all its components are regular (each component function $\Tr_k(t'F_2(z))$ has 
the form of \eqref{f1} with $c\Tr_k(t')$ in place of $c$). 

In the general case with balanced $B$, consider $\pi_2^{-1}(x)$ as a polynomial in $\F_q[x]$ and assume that $\pi_2^{-1}(x) 
\pmod{x^{p^k}-x}$ is a linearized polynomial over $\F_q[x]$. This is equivalent to saying that 
\[
\pi_2^{-1}(x) = \sum_{i} \alpha_i x^{s_i(p^k-1)+p^{r_i}}.
\]
Therefore,
\[
\pi_2^{-1}\left( \frac{t}{b'} \right) = \sum_{i} \frac{\alpha_i t^{p^{r_i}}}{ (b')^{s_i(p^k-1)+p^{r_i}}} = \sum_{i} \beta_i t^{p^{r_i}} =:L_2(t),
\]
where 
\[
\beta_i=\frac{\alpha_i}{ (b')^{s_i(p^k-1)+p^{r_i}}} .
\]
Then \eqref{MainEqT2} reduces to
\begin{align*}
    \cW_{f_1}(-a) &= p^{m-k} \sum_{\gamma \in \F_{p^k}} \zeta_p^{B(\gamma)} \sum_{t \in \F_{p^k}^*} \zeta_p^{\Tr_m(a' L_2(t))+\Tr_k (\gamma t)}\\
    &= p^{m-k} \sum_{\gamma \in \F_{p^k}} \zeta_p^{B(\gamma)} \sum_{t \in \F_{p^k}^*} \zeta_p^{\Tr_m( \sum_{i} a'\beta_i t^{p^{r_i}})+\Tr_k (\gamma t)}\\
    &= p^{m-k} \sum_{\gamma \in \F_{p^k}} \zeta_p^{B(\gamma)} \sum_{t \in \F_{p^k}^*} \zeta_p^{\Tr_m( t\sum_{i} (a'\beta_i)^{p^{k-r_i}})+\Tr_k (\gamma t)}\\
    &= p^{m-k} \sum_{\gamma \in \F_{p^k}} \zeta_p^{B(\gamma)} \sum_{t \in \F_{p^k}^*} \zeta_p^{\Tr_k( t \Tr_k^m(\sum_{i} (a'\beta_i)^{p^{k-r_i}}))+\Tr_k (\gamma t)}\\
     &= p^{m-k} \sum_{\gamma \in \F_{p^k}} \zeta_p^{B(\gamma)} \sum_{t \in \F_{p^k}^*} \zeta_p^{\Tr_k ((\delta_2+\gamma) t)},\\
     &= p^{m-k} \sum_{\gamma \in \F_{p^k}} \zeta_p^{B(\gamma)} \sum_{t \in \F_{p^k}} \zeta_p^{\Tr_k ((\delta_2+\gamma) t)}\\
     &= q\zeta_p^{B(-\delta_2)},
\end{align*}
where 
\[
\delta_2 = \Tr_k^m \left(\sum_{i} (a'\beta_i)^{p^{k-r_i}}\right).
\]
This completes the proof.
\end{proof}

\begin{cor}\label{co2}
If permutation $\pi_2^{-1}(x)$ (considered as a polynomial in $\F_q[x]$) reduced modulo $x^{p^k}-x$, is a linearized polynomial over 
$\F_q[x]$ then $\cS_2$ in \eqref{part2} is a normal bent partition of $\F_{q^2}$ of Type I. All bent functions obtained from $\cS_2$ are 
regular. Conversely, if $q>3$ then every $p$-ary bent function that is constant on the elements of $\cS_2$ is such that for every 
$j\in\F_p$, $D_j$ contains exactly $p^{k-1}$ elements of     $\cS_2\setminus\{U\}$. 
\end{cor}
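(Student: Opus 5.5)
The plan is to mirror the proof of Corollary~\ref{co1}, with $\cV_m$, $g_2$ and Theorem~\ref{th:f2} in place of $\cU_m$, $g_1$ and Theorem~\ref{th:f1}. By Definition~\ref{nbp}, take any $f:\F_{q^2}\mapsto\F_p$ with $f(V)\equiv c$ such that, for every $j\in\F_p$, $D_j$ contains exactly $p^{k-1}$ of the sets $\cA_2(\gamma)$, $\gamma\in\F_{p^k}$. Subtracting the constant $f(\cA_2(0))$ shifts all Walsh coefficients by a unimodular factor and changes $c$ accordingly, so bentness and regularity are unaffected. Hence we may assume $f(\cA_2(0))=0$.

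Next define $B:\F_{p^k}\mapsto\F_p$ by $B(\gamma)=f(z)$ for any $z\in\cA_2(\gamma)$. This is well defined because $f$ is constant on each $\cA_2(\gamma)$. The counting condition says every value of $\F_p$ is taken by $B$ exactly $p^{k-1}$ times, so $B$ is balanced, and $B(0)=0$. Since $g_2(V)=0$ and $g_2(V_j^*)\equiv j$, for $z\notin V$ we get $z\in V_j^*$ for a unique $j$. Then $\Tr_k^m(g_2(z))=\Tr_k^m(j)=\gamma$ exactly when $z\in\cA_2(\gamma)$. The term $c(1-(\beta^{-1}(z-z^q))^{q-1})$ equals $c$ on $V=\F_q$ and $0$ off $V$. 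Therefore $f$ coincides with $f_2$ of \eqref{f2} for this $B$ and $c$.

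Theorem~\ref{th:f2} now covers every $a\in\F_{q^2}$. It gives $\cW_{f_2}(a)=q\zeta_p^c$ when $a+a^q=0$ (including $a=0$), and $\cW_{f_2}(a)=q$ when $a\in\F_q^*$. For the remaining $a$, part~(\ref{it:4}) gives $q\zeta_p^{B(-\delta_2)}$, using the hypothesis that $\pi_2^{-1}(x)$ reduced modulo $x^{p^k}-x$ is linearized. So every Walsh coefficient has the form $q\zeta_p^{(\cdot)}$, and $f$ is regular bent. This holds for every admissible $f$, so $\cS_2$ is a normal bent partition. It is of Type I because $|V|=q$ and $|\cA_2(\gamma)|=p^{m-k}(q-1)$ for all $\gamma$.

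For the converse, assume $q>3$ and let $f$ be a $p$-ary bent function constant on the elements of $\cS_2$. I would invoke Nyberg's lemma~\cite[Lemma~1(i)]{AM22} exactly as in Corollary~\ref{co1}. The lemma forces the preimage counts of a bent function constant on the partition's pieces. Since the $p^k$ sets $\cA_2(\gamma)$ have equal size and $V$ is a subspace of size $q$, this forces each $D_j$ to contain exactly $p^{k-1}$ of them.

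The main obstacle is this converse step: matching the hypotheses of the cited lemma, where the condition $q>3$ enters. I expect it to go through verbatim as in Corollary~\ref{co1}, because only the cardinalities of the pieces matter there. (The statement writes $\cS_2\setminus\{U\}$; it should read $\cS_2\setminus\{V\}$.)
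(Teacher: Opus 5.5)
Your proposal is correct and follows the argument the paper intends. The paper gives no separate proof of Corollary~\ref{co2}; it relies on the analogue of the proof of Corollary~\ref{co1}, which is what you carry out: normalize $f(\cA_2(0))=0$, read off a balanced $B$ with $B(0)=0$ so that $f$ takes the form \eqref{f2}, apply all cases of Theorem~\ref{th:f2} to see that every Walsh coefficient equals $q\zeta_p^{e}$ for some $e\in\F_p$, and get the converse from Nyberg's lemma. Your correction of $\cS_2\setminus\{U\}$ to $\cS_2\setminus\{V\}$ is also right.
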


\begin{rmk}
Theorem~\ref{th:f2} and Corollary~\ref{co2} also hold for $p=2$ taking 
\[g_2(z)=  (\beta^qz+\beta z^q)(\pi_1(z+z^q))^{q-2},\]
The sets $V$ and $V_j$, $j \in \F_q$ are defined as follows 
\begin{align*}
    V&= \{ z \in \F_{q^2} \mid z+z^q=0\} = \F_q,\\
    V_j &= \left \{z \in \F_{q^2} \mid (\beta^qz+\beta z^q)=  j \pi_1(z+z^q)  \right \}.
\end{align*}
For any $j \in \F_q$, the bijective map from $\F_q$ to $V_j$ is defined as $u \mapsto u\beta+j \pi_1(u)$. The proof exactly follows the one 
of Theorem~\ref{th:f2}.
\end{rmk}

\section{Examples of Bent Partitions of Type I}
Take $p$ odd and let $\beta \in \F_{q^2}$ such that $\beta^q=-\beta$. Define a map $\psi$ from $\F_q \times \F_q \rightarrow \F_{q^2}$ as
\[
\psi(x,y) = \frac{x+\beta y}{2} =:z.
\]
It is straightforward to see that $\psi$ is a bijective map and its inverse is given by
\[
\psi^{-1}(z) = \left(z+z^q, \frac{z-z^q}{\beta} \right).
\]

Using the above defined bijective map $\psi$, the bivariate representation of our Construction~1 is given by
\begin{align*}
    g_1(x,y) &= ((\pi_1(x))^{q-2}y\\
    U &=(0,y)~\mbox{and}~U_j=(x,j \pi_1(x))~\mbox{for all}~j\in \F_q\\
    \cU_m &= \{U, U_j \mid j \in \F_q\}\\
    f_1(x,y) &= B(\Tr_k^m((\pi_1(x))^{q-2}y))+ c(1-x^{q-1}).
\end{align*}

For any $\gamma \in \F_q$, 
\[
 \cA_1(\gamma)= \bigcup_{\substack{j \in \F_q \\ \Tr_k^m(j)= \gamma}} U_j^*,
\]
and 
\begin{equation*}
    \cS_1=\{U, \cA_1(\gamma) \mid \gamma \in \F_{p^k}\}
\end{equation*}

Consider two special cases:
\begin{enumerate}
    \item Assume $k=m$ and $\pi_1$ is the identity map, i.e., $\pi_1(x)=x$ for all $x \in \F_q$. Then
    \begin{enumerate}
        \item bent partition $\cS_1$ reduces to $\cU_m$ and is called the {\bf Desarguesian spread}.
     \item function $f_2$ reduces to
    \begin{equation*}
         f_1(x,y) = B(x^{q-2}y)+ c(1-x^{q-1}).
    \end{equation*}
    Dillon~\cite{D74} showed that Boolean (also generalized for thee $p$-ary case in \cite{LiLu14}) bent functions obtained from the 
    Desarguesian spread have the above form and called them {\bf $\cPS_{ap}$ bent functions}. 
    \end{enumerate}
    \item Assume $\pi_1(x)=x^e$ such that $\gcd(e, q-1)=1$ and $e \equiv p^{\ell} \pmod{p^k-1}$. Then
    \begin{enumerate}
        \item according to \cite{AKM26}, partition $\cS_1$ is called a {\bf generalized Desarguesian spread}. 
        \item function $f_1$ reduces to 
        \[
         f_1(x,y) = B(\Tr_k^m((x^e)^{q-2}y))+ c(1-x^{q-1})
        \]
        The authors in \cite{AKM23} showed that bent functions obtained from a generalized Desarguesian spread have the above form and 
        called them {\bf generalized $\cPS_{ap}$ bent functions}. 
    \end{enumerate}
\end{enumerate}

\begin{rmk}
If $k=m$ and $\pi_1$ is a linear permutation of $\F_q$, then the bijection from $\F_q$ to $U_j$ given in \eqref{u1}, is a linear map and 
hence, $\cU_m$ is a spread of $\F_{q^2}$. Consequently, all bent functions obtained from the normal bent partition $\cS_1=\cU_m$ belong to 
the partial spread class. 
\end{rmk} 

\begin{rmk}
We use the term (partial) spread if elements of a partition have a subspace structure in $\F_{q^2}$. It is easy to observe that if we 
choose $\pi_1(x)=x^e$ such that $\gcd(e, q-1)=1$ and $e \equiv p^{\ell} \pmod{p^k-1}$ then subsets $U_j$ (and also elements of $\cS_1$) are 
not necessarily subspaces in $\F_{q^2}$. Therefore, using the term  generalized Desarguesian {\bf spread} is not suitable here. Instead, we 
suggest using the term {\bf generalized Desarguesian spread} for the case when $\pi_1$ is a linear permutation. In the particular case, 
when $k=m$ and $\pi_1$ is the identity map, we obtain the Desarguesian spread. 
\end{rmk}

Using the above defined bijective map $\psi$, the bivariate representation of our Construction~2 is given by
\begin{align*}
    g_2(x,y) &= x(\pi_2(y))^{q-2}\\
    V &=(x,0)~\mbox{and}~ V_j=(j \pi_2(y),y)~\mbox{for all}~j\in \F_q\\
    \cV_m &= \{V, V_j \mid j \in \F_q\}\\
    f_2(x,y) &= B(\Tr_k^m(x(\pi_2(y))^{q-2}))+ c(1-y^{q-1}).
\end{align*}
For any $\gamma \in \F_q$
\[
 \cA_2(\gamma)= \bigcup_{\substack{j \in \F_q \\ \Tr_k^m(j)= \gamma}} V_j^*,
\]
and 
\begin{equation*}
    \cS_2=\{V, \cA_2(\gamma) \mid \gamma \in \F_{p^k}\}
\end{equation*}

Consider two special cases:
\begin{enumerate}
    \item Assume $k=m$ and $\pi_2$ is the identity map, i.e., $\pi_2(y)=y$ for all $y \in \F_q$. Then
    \begin{enumerate}
        \item bent partition $\cS_2$ reduces to $\cV_m$, and is the Desarguesian spread.
     \item function $f_2$ reduces to
    \begin{equation*}
         f_2(x,y) = B(xy^{q-2})+ c(1-y^{q-1}).
    \end{equation*}
    Dillon~\cite{D74} showed that Boolean (also true for the $p$-ary, see \cite{LiLu14}) bent functions obtained from the Desarguesian 
    spread have the above form and called them $\cPS_{ap}$ bent functions. 
    \end{enumerate}
    \item Assume $\pi_2(x)=x^d$ such that $\gcd(d, q-1)=1$ and $d \equiv p^{\ell} \pmod{p^k-1}$. Then
    \begin{enumerate}
        \item according to \cite{AKM26}, partition $\cS_2$ is called a generalized Desarguesian spread. 
        \item function $f_2$ reduces to 
        \[
         f_2(x,y) = B(\Tr_k^m(x(y^d)^{q-2}))+ c(1-y^{q-1})
        \]
        The authors in \cite{AKM23} showed that bent functions obtained from a generalized Desarguesian spread have the above form and 
        called them generalized $\cPS_{ap}$ bent functions. 
    \end{enumerate}
\end{enumerate}

\section{Vectorial Bent Functions and Duality}

In~\cite{CMP18}, the concept of duality for vectorial bent functions was introduced.

\begin{defn}
Vectorial bent function $F: \F_{p^n} \mapsto \F_{p^m}$ is called vectorial dual-bent if the set consisting of the duals of the component functions of $F$, forms the set of component functions of another vectorial bent function from $\F_{p^n}$ to $\F_{p^m}$, denoted by $F^*$, which is called a vectorial dual of $F$. 
\end{defn}

\begin{defn}
Let $n=2m$ be a positive integer and let $0<k\leq m$. A vectorial dual-bent function $F: \F_{p^n} \mapsto \F_{p^k}$ is said to satisfy {\bf 
Condition A}. If 
\begin{enumerate}
    \item all component functions $F_t= \Tr_k(tF)$ with $t \in \F_{p^k}^*$, are regular, or all component functions are weakly regular 
        but not regular 
    \item for any $t_1, t_2 \in \F_{p^k}^*$ with $t_1+t_2 \neq 0$, the equality
    \[
    (F_{t_1})^*+ (F_{t_2})^*=(F_{t_1+t_2})^*
    \]
    holds (that is, the sum of the duals equals the dual of the sum).
\end{enumerate}
\end{defn}

Let $F: \F_{p^{2m}} \mapsto \F_{p^k}$ be a vectorial function given by
\begin{equation}
    \label{vectF}
F(z)=\Tr_k^m(g_1(z))+ c(1-(z+z^q)^{q-1}).
\end{equation}
Note that partition \eqref{part1} with $U$ joined with $\cA_1(c)$, is the preimage set partition of $F$.

Taking any $t\in\F^*_{p^k}$, the corresponding component function of $F$ is
\begin{equation}
    \label{compF}
    \begin{split}
        F_t(z) &= \Tr_k \left( t \Tr_k^m \left( \frac{z-z^q}{\beta} (\pi_1(z+z^q))^{q-2} \right) + ct(1-(z+z^q)^{q-1}) \right)\\
    &= \Tr_m \left( \frac{t(z-z^q)}{\beta} (\pi_1(z+z^q))^{q-2} \right) + c\Tr_k \left(t(1-(z+z^q)^{q-1}) \right)\\
    &=
    \begin{cases}
         c\Tr_k(t) ~\quad~&\mbox{if}~z\in U,\\
         \Tr_m \left( \frac{t(z-z^q)}{\beta} (\pi_1(z+z^q))^{q-2} \right)~\quad~&\mbox{if}~z\in \F_{q^2} \backslash U
    \end{cases}
    \end{split}
\end{equation}
and has the form of \eqref{f1} with $B(\gamma)=\Tr_k(t\gamma)$ and $c\Tr_k(t)$ taken for $c$. By Theorem~\ref{th:f1}, Case~(\ref{it:1}), 
$F_t$ is bent and its dual is given by 
\begin{equation}
    \label{compFstar}
    \begin{split}
         (F_t)^*(z) &=\Tr_m \left(-\frac{z+z^q}{2} \pi_1^{-1} \left( \left(\frac{\beta(z-z^q)}{2t}\right)^{q-2} \right) \right) + c \Tr_k(t(1-(z-z^q)^{q-1})) \\
 &=
    \begin{cases}
         c\Tr_k(t) ~\quad~&\mbox{if}~z\in \F_q,\\
        \Tr_m \left(-\frac{z+z^q}{2} \pi_1^{-1}\left(\frac{2t}{\beta(z-z^q)}\right) \right)~\quad~&\mbox{if}~z\in \F_{q^2} \backslash \F_q,
    \end{cases}
    \end{split}
\end{equation}
It is easy to observe that
\begin{align*}
    (F_t)^*\left( \frac{2tz}{\beta} \right) &=\Tr_m \left(-\frac{t(z-z^q)}{\beta} \pi_1^{-1}\left((z+z^q)^{q-2}\right) \right) + c \Tr_k(t(1+(z+z^q)^{q-1}))\\
    &=\Tr_k \left( -t\Tr_k^m \left(\frac{z-z^q}{\beta} \pi_1^{-1}\left((z+z^q)^{q-2}\right) \right) +  ct(1+(z+z^q)^{q-1}) \right).
\end{align*}
Thus, for any fixed $t \in \F_{p^k}^*$, the dual of the component function $F_t(z)$ is the component function of the vectorial function 
$G\circ L_t$, where 
\[
G(z)= \Tr_k^m \left(-\frac{z-z^q}{\beta} \pi_1^{-1}\left((z+z^q)^{q-2}\right) \right) +  c(1+(z+z^q)^{q-1})\quad\mbox{and}\quad L_t(z)=\frac{\beta}{2t}z.
\]

\begin{rmk}
    Since the vectorial function $G\circ L_t$ varies with $t$, we can not say this to be a vectorial dual of $F$.
\end{rmk}

In the particular case, when $\pi_1^{-1}(x)= \sum_i \alpha_i x^{s_i(p^k-1)+p^{r_i}}$, then from Equation~\eqref{compFstar}, we have
\begin{align*}
         (F_t)^*(z)  &=\Tr_m \left(-\frac{z+z^q}{2} \pi_1^{-1} \left( \left(\frac{\beta(z-z^q)}{2t}\right)^{q-2} \right) \right) + \Tr_k(ct(1-(z-z^q)^{q-1})) \\
         &=\Tr_m \left( -\sum_{i} \alpha_i \frac{z+z^q}{2} \left(\frac{\beta(z-z^q)}{2t}\right)^{(q-2)(s_i(p^k-1)+p^{r_i})} \right) + \Tr_k(ct(1-(z-z^q)^{q-1})) \\
         &= -\sum_{i} \Tr_m \left(\alpha_i (z+z^q) \left(\frac{\beta(z-z^q)}{t}\right)^{(q-2)(s_i(p^k-1)+p^{r_i})} \right) + \Tr_k(ct(1-(z-z^q)^{q-1})) \\
        &= -\sum_{i} \Tr_m \left(t (\alpha_i (z+z^q))^{p^{k-r_i}} (\beta(z-z^q))^{p^{k-r_i}(q-2)(s_i(p^k-1)+p^{r_i})} \right) + \Tr_k(ct(1-(z-z^q)^{q-1})) \\
        &= -\sum_{i} \Tr_k \left(t \Tr_k^m \left( (\alpha_i (z+z^q))^{p^{k-r_i}} (\beta(z-z^q))^{p^{k-r_i}(q-2)(s_i(p^k-1)+p^{r_i})} \right) \right)+ \Tr_k(ct(1-(z-z^q)^{q-1})) \\
        &=  -\Tr_k \left(t  \Tr_k^m \left(\sum_{i} (\alpha_i (z+z^q))^{p^{k-r_i}} (\beta(z-z^q))^{p^{k-r_i}(q-2)(s_i(p^k-1)+p^{r_i})} \right) +ct(1-(z-z^q)^{q-1}) \right).
    \end{align*}
We conclude that the vectorial function
\begin{equation}
    \label{dualvectF}
    \begin{split}
        F^*(z) &=  \Tr_k^m \left( \sum_{i} (\alpha_i (z+z^q))^{p^{k-r_i}} (\beta(z-z^q))^{p^{k-r_i}(q-2)(s_i(p^k-1)+p^{r_i})} \right) +c(1-(z-z^q)^{q-1})
    \end{split}
\end{equation}
is such that each of its component function $F_t^*$ is the dual of the component function $F_{-t}$ of $F$.

\begin{prop}
    Let $F$ be a vectorial function given in \eqref{vectF}. If $\pi_1^{-1} (x) = \sum_i\alpha_i x^{s_i(p^k-1)+p^{r_i}}$ then $F$ is vectorial dual-bent function and it satisfies Condition A.
\end{prop}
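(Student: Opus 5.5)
The plan is to verify the three requirements directly, using Theorem~\ref{th:f1}, Case~(\ref{it:1}) and the computation of $(F_t)^*$ carried out just before the statement.

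\textbf{Step 1: $F$ is vectorial bent with regular components.} By~\eqref{compF}, for every $t\in\F_{p^k}^*$ the component $F_t$ has the form~\eqref{f1} with $B(\gamma)=\Tr_k(t\gamma)$ and constant $c\Tr_k(t)$. Theorem~\ref{th:f1}, Case~(\ref{it:1}) then shows that $F_t$ is regular bent. Hence $F$ is vectorial bent, and part (1) of Condition~A holds, since all components are regular.

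\textbf{Step 2: $F$ is vectorial dual-bent.} The displayed computation shows that for each $t\in\F_{p^k}^*$ the dual $(F_t)^*$ equals $-\Tr_k(tH(z))$ plus the $c$-term, where $H(z)=\Tr_k^m\bigl(\sum_i(\alpha_i(z+z^q))^{p^{k-r_i}}(\beta(z-z^q))^{p^{k-r_i}(q-2)(s_i(p^k-1)+p^{r_i})}\bigr)$. Thus $(F_{-t})^*=(F^*)_t$ with $F^*$ as in~\eqref{dualvectF}.

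\begin{itemize}
\item I will recheck the sign bookkeeping in the $c$-part. Note that $(F_t)^*$ restricted to $\F_q$ equals $c\Tr_k(t)$, and the $c$-term must be linear in $t$; this is what makes $F^*$ a single function.
\item As $t$ runs over $\F_{p^k}^*$, so does $-t$. Hence $\{(F_t)^*\}_t=\{(F^*)_t\}_t$.
\item Each $(F^*)_t$ is a dual of a regular bent function, so it is bent. Therefore $F^*$ is vectorial bent and $F$ is vectorial dual-bent.
\end{itemize}

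\textbf{Step 3: additivity of duals.} For $t_1,t_2,t_1+t_2\neq0$, linearity of $\Tr_k$ in $t$ gives
\[
(F_{t_1})^*+(F_{t_2})^*=(F^*)_{-t_1}+(F^*)_{-t_2}=(F^*)_{-(t_1+t_2)}=(F_{t_1+t_2})^*.
\]
This uses that $(F^*)_s=\Tr_k(sF^*)$ is additive in $s$.

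\textbf{Main obstacle.} The delicate point is justifying that the dual formula is valid uniformly, including on $z\in\F_q$ and in the $c$-term. I would handle it by recomputing $\cW_{F_t}$ via Theorem~\ref{th:f1}:

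\begin{itemize}
\item for $a\in\F_q$ one gets $\cW_{F_t}(a)=q\zeta_p^{c\Tr_k(t)}$;
\item for $a\notin\F_q$ one gets $q\zeta_p^{\Tr_m(a'\pi_1^{-1}(-t/b'))}$, and linearity of $\pi_1^{-1}$ modulo $x^{p^k}-x$ lets $t$ be pulled out.
\end{itemize}

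Since $t/b'$ is not in $\F_{p^k}$, I would rewrite $\pi_1^{-1}(-t/b')$ via the given monomial form. This yields $\sum_i\alpha_i(-t)^{p^{r_i}}b'^{-(s_i(p^k-1)+p^{r_i})}$, because $t^{p^k-1}=1$. This expression is $\F_{p^k}$-semilinear in $t$, which is exactly what makes the dual linear in $t$.
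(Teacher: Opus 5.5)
Your proposal is correct and follows the paper's own route: every component $F_t$ is regular bent by Theorem~\ref{th:f1}(\ref{it:1}), the dual comes from the explicit computation just before the statement, and part (2) of Condition~A follows from linearity of $\Tr_k$ in $t$. Your planned recheck of the $c$-term is warranted: $(F_t)^*$ equals $c\Tr_k(t)$ on $\F_q$ and $-\Tr_k(tH(z))$ off $\F_q$, so the function with $(F^*)_t=(F_{-t})^*$ is $H(z)-c\bigl(1-(\beta(z-z^q))^{q-1}\bigr)$, not the $c$-term printed in \eqref{dualvectF} (off $\F_q$ one has $(z-z^q)^{q-1}=-1$); this does not affect the argument, since that term is still linear in $t$.
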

\begin{proof}
    The vectorial dual-bent property is proven. For any $t_1, t_2 \in \F_{p^k}^*$, with $t_1+t_2 \neq 0$, the equality $(F_{t_1})^*+ (F_{t_2})^*=(F_{t_1+t_2})^*$
    follows directly from the linearity of the trace function.
\end{proof}

\begin{defn}
    Let $\cS = \{U, \cA(\gamma) \mid \gamma \in \F_{p^k}\}$ be a normal bent partition of $\F_{p^{2m}}$ of depth $p^k$. Then $\cS$ is said to satisfy {\bf Condition C}. If
    \begin{enumerate}
        \item $\F_p^* \cA(\gamma) = \cA(\gamma)$ for all $\gamma \in \F_{p^k}$.
        \item all bent functions obtained from $\cS$ are regular, or all bent functions obtained from $\cS$ are weakly regular but not regular.
    \end{enumerate}
    Joining $U$ with some $\cA(\gamma)$, we obtain a bent partition of $\F_{p^{2m}}$ satisfying Condition C.
\end{defn}

By \cite[Theorem~6]{AKM26}, since $F$ (with appropriate $\pi_1$) is a vectorial dual-bent function that satisfies Condition A, then 
preimage set partition defined by $F$, is a bent partition satisfying Condition C. In the following proposition, we verify this directly. 

\begin{prop}
    Take appropriate permutations $\pi_1$ (as described in Corollary~\ref{co1}), then normal bent partition $\cS_1$ defined
    in~\eqref{part1}, satisfies Condition C. 
\end{prop}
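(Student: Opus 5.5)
Condition C has two parts, and I would check them separately. Part (2) needs no new work. By Corollary~\ref{co1}, if $\pi_1^{-1}(x)$ reduced modulo $x^{p^k}-x$ is a linearized polynomial, then $\cS_1$ is a normal bent partition of depth $p^k$. Moreover, every bent function obtained from it has the form \eqref{f1} with some balanced $B$, $B(0)=0$. By Theorem~\ref{th:f1}, Case~(\ref{it:2}), every such function is regular. So all bent functions obtained from $\cS_1$ are regular, which is item (2).

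The real content is item (1): $\F_p^*\cA_1(\gamma)=\cA_1(\gamma)$ for every $\gamma\in\F_{p^k}$. Take $\lambda\in\F_p^*$ and $z\in U_j^*$ with $\Tr_k^m(j)=\gamma$. Write $z=\frac12(u+j\beta\pi_1(u))$ with $u\in\F_q^*$, using the bijection \eqref{u1}. Since $\lambda\in\F_p\subseteq\F_q$, we get $\lambda z+(\lambda z)^q=\lambda u$ and $\lambda z-(\lambda z)^q=\lambda j\beta\pi_1(u)$. Hence $\lambda z\in U_{j'}^*$ exactly when
\[
\lambda j\,\pi_1(u)=j'\,\pi_1(\lambda u).
\]
So one needs $\Tr_k^m(j')=\gamma$ for the resulting $j'=\lambda j\,\pi_1(u)/\pi_1(\lambda u)$. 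For $j=0$ this is trivial, since $U_0^*$ is stable under scalars. In general it suffices to show $\pi_1(\lambda u)=\lambda\pi_1(u)$ for all $\lambda\in\F_p^*$; then $j'=j$ and each $U_j^*$ is itself $\F_p^*$-stable.

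The step I expect to be the main obstacle is extracting this $\F_p$-homogeneity of $\pi_1$ from the hypothesis. Take $\pi_1^{-1}(x)=\sum_i\alpha_i x^{s_i(p^k-1)+p^{r_i}}$. For $\lambda\in\F_p^*\subseteq\F_{p^k}^*$ we have $\lambda^{p^k-1}=1$ and $\lambda^{p^{r_i}}=\lambda$. Hence
\[
\pi_1^{-1}(\lambda x)=\sum_i\alpha_i\lambda^{s_i(p^k-1)+p^{r_i}}x^{s_i(p^k-1)+p^{r_i}}=\lambda\,\pi_1^{-1}(x)
\]
for all $x\in\F_q$. Applying $\pi_1$ to both sides with $x=\pi_1(y)$ gives $\pi_1(\lambda y)=\lambda\pi_1(y)$, as required. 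This yields $\lambda U_j^*=U_j^*$, and taking unions gives $\lambda\cA_1(\gamma)=\cA_1(\gamma)$.

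One point needs care: the exponent form above must hold as an identity of functions on $\F_q$, not just modulo $x^{p^k}-x$. I would reread the hypothesis of Corollary~\ref{co1} in the form stated in Theorem~\ref{th:f1}(\ref{it:2}), which writes $\pi_1^{-1}(x)$ itself in that exponent form. If only the reduced form were available, I would instead use that all monomial exponents of $\pi_1^{-1}$ are congruent to powers of $p$ modulo $p^k-1$, hence modulo $p-1$. Homogeneity under $\F_p^*$ only depends on exponents modulo $p-1$, so the same argument goes through.

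Finally I would remark that the statement is consistent with \cite[Theorem~6]{AKM26}, since Condition A was already established in the preceding proposition.
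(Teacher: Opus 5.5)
Your proof is correct and is essentially the paper's: item (2) comes from Corollary~\ref{co1}, and item (1) from the homogeneity $\pi_1^{-1}(\lambda x)=\lambda\pi_1^{-1}(x)$ for $\lambda\in\F_p^*$, read off from the exponent form. The paper applies this homogeneity directly to the parametrization $u\mapsto\frac12(\pi_1^{-1}(u)+j\beta u)$ of $U_j$, whereas you first transfer it to $\pi_1$.

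One caution about your fallback remark. Knowing only that $\pi_1^{-1}$ becomes linearized after reduction modulo $x^{p^k}-x$ does not force every monomial exponent of $\pi_1^{-1}$ to be congruent to a power of $p$ modulo $p^k-1$, because coefficients of congruent exponents may cancel. So that route does not give homogeneity, and the exponent form $\sum_i\alpha_i x^{s_i(p^k-1)+p^{r_i}}$, which the paper also uses, is the hypothesis actually needed.
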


\begin{proof}
Part (2) of Condition C is satisfied because of Corollary~\ref{co1}. Further, since $U= \beta \F_q$, then for any $c \in \F_p^*$, $cU=U$. 
Now we show that for any $j \in \F_{p^k}$, $cU_j=U_j$. It is easy to verify that the function 
    \[
    u \mapsto \frac{1}{2}(\pi^{-1}(u)+j \beta u)
    \]
    is a bijection from $\F_q$ to $U_j$. For any  $c \in \F_p^*$, we have
    \[
    cU_j= \left\{ \frac{1}{2}(c\pi^{-1}(u)+j \beta cu) \mid u \in \F_q \right\} = \left\{ \frac{1}{2}(\pi^{-1}(u)+j \beta u) \mid cu \in \F_q \right\} =U_j,
    \]
    where the second equality follows from the fact that
    \[
    \pi_1^{-1}(cu) = \sum_{i} \alpha_i (cu)^{s_i(p^k-1)+p^{r_i}} = c \sum_{i} \alpha_i u^{s_i(p^k-1)+p^{r_i}}=c\pi_1^{-1}(u) .
    \]
This completes the proof.
\end{proof}

\begin{rmk}
In the similar way, it can be proven that function $\Tr_k^m(g_2(z))+ c(1-(\beta^{-1}(z-z^q))^{q-1})$ (with appropriate permutation $\pi_2$, 
as described in Corollary~\ref{co2}) is vectorial dual-bent function and it satisfies Condition A. Therefore, corresponding normal bent 
partition $\cS_2$ defined in~\eqref{part2}, satisfies Condition C. We do not reproduce the proof here since this is an easy exercise. Also, 
the same holds for $p=2$ and slightly modified partitions defined in Remarks~2.3~and~3.3.
\end{rmk}


\begin{thebibliography}{99}
\bibitem{AAKM24} S. Alkan, N. Anbar,  T. Kalayci, W. Meidl, {\it Bent partitions and LP-packings}, IEEE Trans. Inf. Theory 70 (2024) 5365--5375. 

\bibitem{AM22} N. Anbar, W. Meidl, {\it Bent partitions}, Des. Codes Cryptogr. 90 (2022) 1081--1101. 

\bibitem{AKM26}  N. Anbar,  T. Kalayci, W. Meidl, {\it A survey on bent partitions}, Cryptogr. Commun. 2026. 

\bibitem{AKM23} N. Anbar,  T. Kalayci, W. Meidl, {\it On generalized spread bent partitions}, Cryptogr. Commun. 15 (2023) 1217--1234. 

\bibitem{CMP18} A. Çeşmelioğlu, W. Meidl, A. Pott, {\it Vectorial bent functions and their duals,} Linear Algebra Appl. 548 (2018) 305--320.

\bibitem{D74} J. F. Dillon, {\it Elementary Hadamard difference sets}, Ph.D. dissertation, University of Maryland (1974). 

\bibitem{KSW85} P. V. Kumar, R. A. Scholtz, L. R. Welch,  {\it Generalized bent functions and their properties}, J. Combin. Theory Ser. A  40 (1985) 90--107.

\bibitem{R76} O. S. Rothaus, {\it On ``bent" functions,} J. Combinat. Theory A 20(3) (1976) 300-305.

\bibitem{WWF26} J. Wang, Y. Wei, F. W. Fu, {\it Further results on bent partitions}, IEEE Trans. Inf. Theory. 72 (2026) 3447--3459. 
\end{thebibliography}
\end{document}